\documentclass[a4paper,11pt]{amsart}

\usepackage[T1]{fontenc}
\usepackage[utf8]{inputenc}

\usepackage{amssymb}
\usepackage{mathtools, color}

\usepackage[active]{srcltx} 
\usepackage[driverfallback=dvipdfm]{hyperref}

\makeatletter
\@namedef{subjclassname@2020}{%
  \textup{2020} Mathematics Subject Classification}
\makeatother

\allowdisplaybreaks[2]

\newtheorem{theorem}{Theorem}[section]
\newtheorem{proposition}[theorem]{Proposition}
\newtheorem{lemma}[theorem]{Lemma}
\newtheorem{corollary}[theorem]{Corollary}
\theoremstyle{definition}
\newtheorem{remark}[theorem]{Remark}

\newtheorem{example}[theorem]{Example}

\numberwithin{equation}{section}

\begin{document}

\title[Commutativity preserving mappings]{Commutativity preserving mappings in Banach algebras}

\author{M. Bre\v{s}ar}
\author{G. M. Escolano}
\author{A. M. Peralta}
\author{A. R. Villena}

\address{M. Brešar: Faculty of Mathematics and Physics, University of Ljubljana \&
Faculty of Natural Sciences and Mathematics, University of Maribor \& IMFM, Ljubljana, Slovenia}
\email{matej.bresar@fmf.uni-lj.si}

\address{G. M. Escolano, A. M. Peralta, A. R. Villena: Departamento de An\' alisis Matem\' atico, Facultad de Ciencias, Universidad de Granada, Granada, Spain}
\email{gemares@ugr.es}
\email{aperalta@ugr.es}
\email{avillena@ugr.es}


\keywords{Commutativity preserving mapping, Lie homomorphism, Jordan homomorphism, Banach algebra, von Neumann algebra,  functional identities.}

\subjclass[2020]{16R60, 46H05, 46L10}

\begin{abstract}
Let $A$ and $B$ be unital complex Banach algebras having no quotients isomorphic to $\mathbb{C}$ or $M_2(\mathbb{C})$. 
Assume additionally that $B$ is semisimple. If
a surjective additive mapping $\Phi\colon A\to B$  satisfies $[\Phi(x^2),\Phi(x)] = 0$ for all $x\in A$,  
then there exist a surjective direct sum of an additive homomorphism and an additive
anti-homomorphism $\Psi\colon A\to B$,
an invertible element $\lambda\in\mathcal{Z}(B)$, and 
an additive mapping $\zeta\colon A\to\mathcal{Z}(B)$ such that 
$
\Phi(x)=\lambda\Psi(x)+\zeta(x)$ for all $x\in A$.
\end{abstract}
\maketitle

\section{Introduction} 

Let $A$ and $B$ be algebras. We say that a mapping $\Phi:A\to B$ {\em preserves commutativity} if  
for all $x,y\in A$, $[x,y]=0$ implies $[\Phi(x),\Phi(y)]=0$; here, $[x,y]$ stands for $xy-yx$.  
There is a vast literature on these mappings. 
For a historical account, we refer the reader to \cite[pp.\ 218-219]{BreCheMar} and to the recent papers 
\cite{EPV, FKS} for a more updated information.

One usually assumes that a commutativity preserver $\Phi$ is linear or at least additive, and also that it is bijective or at least surjective. Under these conditions, one wishes to show that $\Phi$ is of the form
\begin{equation}\label{ena}
\Phi(x)=\lambda\Psi(x)+\zeta(x) \quad\forall x\in A,
\end{equation}
where $\lambda$ is an invertible element from
the center $\mathcal Z(B)$ of $B$, $\zeta:A\to \mathcal Z(B)$ is an additive map, and $\psi:A\to B$
is a Jordan homomorphism.
Since we will deal  with additive (rather than linear) mappings,   by a {\em Jordan homomorphism} we will mean an additive (not necessarily linear) mapping
$\Psi:A\to B$ that satisfies $\Psi(x\circ y)=\Psi(x)\circ \Psi(y)$ for all $x,y\in A$; here, $x\circ y$ stands for $xy+yx$.

Note that $2\times 2$ matrices over a field commute if and only if they are linearly dependent modulo the scalar matrices. 
It is therefore easy to construct commutativity preserving linear mappings on  $2\times 2$ matrix algebras that are not of 
the standard form in \eqref{ena}. At the end of the paper, we will provide  a different kind of nonstandard example on a 
certain $C^*$-algebra $A$ that contains a maximal ideal $M$ such that $A/M\cong \mathbb C$ (Example \ref{ex42}). 
Therefore, when studying commutativity preservers between complex algebras, it is natural to assume that these algebras 
have no quotients isomorphic to $\mathbb C$ or
$M_2(\mathbb C)$.

The goal of this paper is to prove that a surjective commutativity preserving additive mapping $\Phi:A\to B$ is of the standard form in \eqref{ena} if
$A$ and $B$ are unital complex Banach algebras that 
satisfy the aforementioned condition on quotients and, additionally, $B$ is semisimple (see Theorem \ref{mt}). 
To the best of our knowledge, no other result on commutativity preservers in the literature considers such general
 classes of Banach algebras.

A well-established approach for  tackling additive commutativity preservers is applying the theory of 
functional identities \cite{BreCheMar}. 
Using it, one often only needs to assume
that $\Phi(x)$ always commutes with $\Phi(x^2)$.
  We will follow this approach and 
  assume only this mild condition. 
  A general result from \cite{BreCheMar} 
  (Theorem \ref{t0})  together with one of our auxiliary results (Lemma \ref{1603}) immediately shows that our main theorem  is true in the special case where the Banach algebra  $B$ is  primitive. We are assuming, however, that $B$ is semisimple, so it is natural to pass to quotients $B/P$ with primitive ideals $P$.
  The main novelty of this paper is the method used to ``glue'' the    information obtained from  Theorem \ref{t0} for  the quotients $B/P$   to the whole $B$. We believe that this method has  potential to be used elsewhere.

The principal result will be proved in Section \ref{s2}. In Section \ref{s3}, we will include a wide variety of examples of Banach algebras that satisfy the requirements of our main theorem; notably, we will reinterpret it in the case of von Neumann algebras, to show that it generalizes the result of \cite{BreMie}. In Section~\ref{s4}, we will derive a corollary on Lie homomorphisms. 
 
We finally remark that our proofs are almost entirely algebraic. The analytic setting makes it possible for us to obtain particularly neat results.

\section{Main theorem}\label{s2}

Our starting point is \cite[Theorem 7.3]{BreCheMar}.

\begin{theorem}\label{t0}
Let $J$ be a Jordan subring (of some ring), 
let $K$ be the Jordan ideal of $J$ generated by $\bigl[[J,J],[[J,J],J]\circ[[J,J],J]\bigr]$, and 
let $L$ be the Jordan ideal of $J$ generated by $K\circ K$. 
Let $Q$ be a unital ring such that $\mathcal{Z}(Q)$ is a field with $\text{\rm char}(\mathcal{Z}(Q))\ne 2$. 
Further, 
let $\Phi\colon J\to Q$ be an additive mapping satisfying $[\Phi(x^2),\Phi(x)] = 0$ for each $x\in J$. 
Suppose that $\Phi(J)$ is a $3$-free subset of $Q$, and 
suppose that $\Phi(L)\not\subseteq\mathcal{Z}(Q)$.
Then there exist a Jordan homomorphism $\Psi\colon J\to Q$, an element $\lambda\in\mathcal{Z}(Q)$, and an
additive mapping $\zeta\colon J\to\mathcal{Z}(Q)$ such that 
\[
\Phi(x)=\lambda\Psi(x)+\zeta(x) \quad\forall x\in J.
\]
\end{theorem}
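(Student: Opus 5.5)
This is \cite[Theorem 7.3]{BreCheMar}, which the paper quotes without proof, so the plan below only reconstructs the route in \cite{BreCheMar}. The tools are the theory of $d$-free sets and the standard functional identities. The idea is to turn the condition $[\Phi(x^2),\Phi(x)]=0$ into a functional identity in several variables on $\Phi(J)$. Then $3$-freeness forces every such identity to have only standard solutions, and the resulting formulas are assembled into $\Phi=\lambda\Psi+\zeta$.

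\textbf{Step 1: linearize the hypothesis.} Write $x\circ y$ for the Jordan product, so that $x^2=\tfrac12\, x\circ x$ morally; in fact we use $\Phi(x\circ y)$ directly. Replacing $x$ by $x+y+z$ in $[\Phi(x^2),\Phi(x)]=0$ and collecting the multilinear part gives
\[
\sum_{\text{cyclic}}\bigl[\Phi(x\circ y),\Phi(z)\bigr]=0 \quad\forall x,y,z\in J.
\]
Here the sum runs over the three ways of splitting $\{x,y,z\}$ into a pair and a singleton. Writing $\bar x=\Phi(x)$ and expanding the commutators, this is a functional identity on $\Phi(J)$ of the form
\[
\sum_i E_i(\bar x_i^{\,c})\,\bar x_i+\sum_j \bar x_j\, F_j(\bar x_j^{\,c})=0 ,
\]
in three variables. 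The coefficients are defined through $\Phi(x\circ y)$. They must be shown to be well defined as functions of $\bar x,\bar y$, and this is where they are first regarded as functions on $J$ and the version of $d$-freeness for functions composed with $\Phi$ is used.

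\textbf{Step 2: apply $3$-freeness.} Since $\Phi(J)$ is $3$-free, the identity has only standard solutions. So there are a biadditive map $p\colon J^2\to Q$ and additive maps $\mu,\nu\colon J\to\mathcal Z(Q)$ with
\[
\Phi(x\circ y)=\bar x\, p(y)\text{-type terms},
\]
which, once symmetry is imposed, becomes
\[
\Phi(x\circ y)=\lambda_0(\bar x\circ\bar y)+\mu(x)\bar y+\mu(y)\bar x+\nu(x,y),
\]
with $\lambda_0\in\mathcal Z(Q)$ and $\nu$ central-valued. The uniqueness part of $d$-freeness, together with the symmetry in $x,y$, yields this symmetric form.

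\textbf{Step 3: absorb the extra terms and build $\Psi$.} We need $\lambda_0\neq0$, so that $\lambda=\lambda_0^{-1}$ exists in the field $\mathcal Z(Q)$. Next, $\mu$ must be absorbed by a shift $\zeta$, and $\lambda$, $\zeta$ must be chosen so that $\Psi=\lambda^{-1}(\Phi-\zeta)$ satisfies $\Psi(x\circ y)=\Psi(x)\circ\Psi(y)$. The shift $\zeta=-\lambda\mu$, with suitable factors of $2$ (allowed since $\operatorname{char}\neq2$), removes the linear terms. The remaining central term $\nu$ is then shown to vanish by comparing both sides of a Jordan identity, such as $(x\circ y)\circ z$ versus symmetrizations, and applying $d$-freeness once more.

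\textbf{Main obstacle.} The delicate point is this last stage, and in particular ruling out the degenerate case $\lambda_0=0$. If $\lambda_0=0$, then $\Phi(x\circ y)$ lies in $\mathcal Z(Q)+\mu(x)\Phi(J)+\mu(y)\Phi(J)$. Iterating through commutators $[J,J]$ and the products defining $K$ and then $L$, one shows that $\Phi$ maps $L$ into $\mathcal Z(Q)$. The ideals $K$ and $L$ are built precisely so that enough iterated Jordan products make the $\mu$-terms cancel. This contradicts the hypothesis $\Phi(L)\not\subseteq\mathcal Z(Q)$. I expect this bookkeeping on the nested Jordan ideals to be the hardest part, and I would follow the argument of \cite{BreCheMar}.
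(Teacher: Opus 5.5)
The paper gives no proof of this statement: it imports it as \cite[Theorem 7.3]{BreCheMar}, exactly as you do, so your proposal matches the paper's treatment. Your outline of the functional-identity argument behind that citation is a reasonable roadmap: linearize, use $3$-freeness and uniqueness of standard solutions to reach $\Phi(x\circ y)=\lambda_0(\bar x\circ\bar y)+\mu(x)\bar y+\mu(y)\bar x+\nu(x,y)$ with $\lambda_0$ central, exclude $\lambda_0=0$ via $\Phi(L)\not\subseteq\mathcal{Z}(Q)$, then normalize. One correction: after the shift $\Psi=\lambda_0\Phi+\tfrac12\mu$, what must vanish is the central defect $\Psi(x\circ y)-\Psi(x)\circ\Psi(y)=\lambda_0\nu(x,y)+\tfrac12\bigl(\mu(x\circ y)-\mu(x)\mu(y)\bigr)$, not $\nu$ itself (which is in general nonzero), and your sketch only asserts this step rather than proving it.
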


\begin{remark}\label{rem22} (a)
It should be pointed out that necessarily $\lambda\ne 0$, 
for otherwise   $\Phi=\zeta$ and hence $\Phi(J)\subseteq\mathcal{Z}(Q)$, contrary to our assumption.
We also remark that the 
 standard representation in Theorem \ref{t0} is unique.

(b) The  necessity of the assumption that $\Phi(L)\not\subseteq\mathcal{Z}(Q)$ is evident from examples in \cite{BL}
and \cite{zpd}.

(c)
In  applications of Theorem \ref{t0}, the ring $Q$ is often taken to be the \emph{maximal left ring of quotients}
$Q_{ml}(R)$ of a semiprime ring $R$, in which case $\mathcal{Z}(Q_{ml}(R))$ is the so-called \emph{extended centroid},
$\mathcal{C}(R)$, of $R$. 
The interested reader is referred to \cite[Apendix A]{BreCheMar} for a rigorous exposition of $Q_{ml}(R)$
neglecting technicalities. For our purposes, however,  all we need to know is that  $Q_{ml}(R)$  is a ring 
enlargement of $R$, and that $\mathcal{C}(R)$ is  equal to $\mathbb{C}$ in the case where $R$ is a primitive 
complex Banach algebra (cf.\ \cite[Proposition 6]{Cab}).

(d) The concept of a $d$-free set occurring in Theorem \ref{t0}   is a fundamental notion in the theory of functional identities. We omit the definition and rather provide information concerning the specific situation in which  Theorem \ref{t0} will be applied. We will arrive at the case where the image of $\Phi$ is a primitive complex Banach algebra $R$. One of the basic results on functional identities states  that a primitive (or, more generally, a prime) ring  $R$ is a $d$-free subset of $Q_{ml}(R)$ if and only if not every element in $R$  is  algebraic over $\mathcal{C}(R)$  of degree less than $d$ \cite[Corollary 5.12]{BreCheMar}. 
As mentioned in (b), in the Banach algebra situation, $\mathcal{C}(R)$ is simply $\mathbb C$. We can then use either the structure theory of algebras with polynomial identities or, more directly, \cite[Lemma 5.4.1]{Au} to conclude that a primitive complex Banach algebra $R$ is a $d$-free subset of $Q_{ml}(R)$ if and only if $R$ is not isomorphic to the matrix algebra $M_k(\mathbb C)$ with $k < d$.
\end{remark}

We continue with  preliminary results.

Let $\mathbb{C}\langle\mathcal{X}\rangle$ the algebra of complex noncommutative polynomials in the
variables $\mathcal{X}=\{X_1,X_2,\ldots\}$, i.e., the free complex algebra generated by the set $\mathcal{X}$.
For each unital complex algebra $A$ and each $f(X_1,\ldots,X_n)\in\mathbb{C}\langle\mathcal{X}\rangle$, we define
$f(A)=\{f(x_1,\ldots,x_n)\colon x_1,\ldots,x_n\in A\}$.

\begin{lemma}\label{1600}
Let $f\in\mathbb{C}\langle\mathcal{X}\rangle \backslash\{0\}$, and
let $A$ be a unital complex Banach algebra with no 
quotient isomorphic to $M_n(\mathbb{C})$ with $n\le\tfrac{1}{2}\text{\rm deg}(f)$.
Then the ideal of $A$ generated by $f(A)$ is equal to $A$.
\end{lemma}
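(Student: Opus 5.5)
Let $I$ be the ideal of $A$ generated by $f(A)$. We argue by contradiction and assume $I\neq A$. Since $A$ is unital, Zorn's lemma gives a maximal (proper) ideal $M\supseteq I$. Maximal ideals in a unital Banach algebra are closed, because the closure of a proper ideal misses the open group of invertibles. So $A/M$ is a unital complex Banach algebra that is simple. Moreover $f$ vanishes identically on $A/M$: the quotient map is a surjective unital homomorphism, and $f(A)\subseteq M$, so $f(\bar x_1,\ldots,\bar x_n)=\overline{f(x_1,\ldots,x_n)}=0$. Thus $A/M$ is a simple Banach algebra satisfying the nonzero polynomial identity $f$.

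The next step is to pass to a primitive Banach algebra. Since $A/M$ is unital it has a maximal left ideal, and the annihilator of the corresponding simple module is a primitive ideal. By simplicity this primitive ideal is $0$, so $A/M$ is primitive. We then invoke the structure theory of primitive PI-algebras, namely Kaplansky's theorem. A primitive algebra satisfying a nonzero polynomial identity of degree $d$ is a central simple algebra of dimension $k^2$ over its centre with $k\le d/2$. Before applying it, we replace $f$ by a multilinear identity of degree at most $\deg f$, obtained by the standard multilinearization, and take a nonzero homogeneous component.

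Finally we identify the centre. The centre of a primitive complex Banach algebra is $\mathbb{C}$: the centroid is a division algebra, and by Gelfand--Mazur it is $\mathbb{C}$. Alternatively, Remark \ref{rem22}(c) says the extended centroid is $\mathbb{C}$. Hence $A/M\cong M_k(\mathbb{C})$ with $k\le \tfrac12\deg(f)$, which contradicts the hypothesis. A more direct route is also available: \cite[Lemma 5.4.1]{Au}, as cited in Remark \ref{rem22}(d), yields finite dimensionality of a primitive Banach algebra whose elements are all algebraic of bounded degree. After that, Wedderburn's theorem together with Gelfand--Mazur gives $M_k(\mathbb{C})$, and the Amitsur--Levitzki bound (no identity of degree $<2k$ on $M_k$) gives $k\le \deg(f)/2$.

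The main obstacle is the PI step, that is, making sure that Kaplansky's theorem applies with the right degree bound. We need the reduction to a multilinear identity without increasing the degree; over $\mathbb{C}$ this is clean, since full linearization works in characteristic $0$. We also need the fact that the nonzero multilinear identity must be checked over the centre, which here is $\mathbb{C}$. Everything else (maximal ideals being closed, simple unital implies primitive, centre equal to $\mathbb{C}$) is routine.
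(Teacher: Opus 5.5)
Your proposal is correct and is essentially the paper's proof: pass to a maximal ideal $M$ containing the ideal generated by $f(A)$, note that $f$ is a polynomial identity of the simple Banach algebra $A/M$, and apply Kaplansky's theorem. The paper cites the Banach-algebra form \cite[Theorem 7.1.14]{Pal}, which directly gives $A/M\cong M_n(\mathbb{C})$ with $n\le\tfrac{1}{2}\deg(f)$; your extra steps (closedness of $M$, primitivity, multilinearization, the centre being $\mathbb{C}$) just unpack that citation. Your suggested ``more direct route'' is not an independent shortcut, though: \cite[Lemma 5.4.1]{Au} assumes all elements are algebraic of bounded degree, and getting that from a polynomial identity already requires Kaplansky's density argument.
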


\begin{proof}
Let $I$ be the ideal of $A$ generated by $f(A)$, and suppose that $I\neq A$. Take a maximal ideal $M$ of $A$  
such that $I\subseteq M$.
Then the quotient $A/M$ is a simple Banach algebra, and it is clear that $f$ is a polynomial identity of $A/M$. We conclude from
Kaplansky's theorem \cite[Theorem 7.1.14]{Pal} that $A/M$ is isomorphic to the matrix algebra $M_n(\mathbb{C})$
with $n\le\tfrac{1}{2}\text{deg}(f)$, contrary to our assumption. 
\end{proof}

According to the standard notation, when $C$ and $D$ are subsets of an algebra $A$, the symbol $[C,D]$ will stand for the additive subgroup generated by the set of commutators $[c, d]$ with $c \in C$ and $d \in D$.

\begin{lemma}\label{1602}
Let $A$ be a unital complex Banach algebra with no  quotient isomorphic to $\mathbb{C}$.
Then the Jordan subring of $A$ generated by $[A,A]$ is equal to $A$.
\end{lemma}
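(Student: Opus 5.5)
Let $J$ be the Jordan subring of $A$ generated by $[A,A]$. The plan is to show first that $J$ contains the two-sided ideal generated by all products of commutators, and then to use Lemma \ref{1600} with the polynomial $f=[X_1,X_2][X_3,X_4]$ to see that this ideal is all of $A$. Since $\deg f=4$, Lemma \ref{1600} needs $A$ to have no quotient $M_n(\mathbb C)$ with $n\le 2$. The hypothesis only excludes $\mathbb C$, so we use a simpler polynomial. Take $f=[X_1,X_2]$, of degree $2$; then Lemma \ref{1600} applies with $n\le 1$, which is exactly our hypothesis. Hence the ideal generated by $[A,A]$ is $A$, and so
\[
1=\sum_i a_i[b_i,c_i]d_i .
\]

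The second step is the algebraic fact that $J$ contains the ideal generated by $[A,A]$. The key identities are the following, for $x,y,z\in A$:
\begin{itemize}
\item[(i)] $[x,y]\circ z = [x, y\circ z]- [x,z]\circ y$...
\end{itemize}
Rather than chasing this directly, I will use the standard Herstein-type identities, with $J$ known to be closed under $\circ$ and to contain all commutators.

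\emph{$J$ is a Lie ideal-like set.} Since $[x,[y,z]]\in[A,A]\subseteq J$, commutators of elements of $A$ with elements of $[A,A]$ stay in $J$.

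\emph{$J$ absorbs multiplication by $A$.} For $u\in[A,A]$ and $a\in A$ we have $2au=a\circ u+[a,u]$. Here $[a,u]\in[A,A]\subseteq J$, but $a\circ u$ need not lie in $J$ because $a$ may not. So I instead show that $A\,[A,A]\,A$ is spanned (additively) by elements of the form $[A,A]\circ[A,A]$, $[A,A]$, and Jordan products thereof. The useful identity is
\[
[x,y]\,[z,w]\text{-type expansions}\quad\text{and}\quad [x,yz]=[x,y]z+y[x,z],
\]
which gives $[x,y]z = [x,yz]-y[x,z]$. Combining these, we have $2\,u\,v = u\circ v + [u,v]$ for $u,v\in[A,A]$, so products of two commutators lie in $J$ (after dividing by $2$, which is allowed since $A$ is a complex algebra and $J$ is a $\mathbb{Q}$-space... though $J$ is only a subring; I would note that $J$ is in fact a complex subspace because $[A,A]$ is one and $\circ$ is bilinear). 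Then $A[A,A]A\subseteq [A,A]+[A,A][A,A]+\dots$ follows from the identity $a[b,c]=[ab,c]-[a,c]b$, iterated, reducing a general element $a[b,c]d$ to commutators and products of commutators, which lie in $J$.

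The main obstacle is this last reduction. The identity $a[b,c]=[ab,c]-[a,c]b$ trades one awkward term for another, so it is not immediately clear that the process closes up. If it does not, I would fall back on the cleaner route through products of commutators. Because $J$ contains every $uv$ with $u,v\in[A,A]$, it also contains $[u,v]w+\dots$. Using Lemma \ref{1600} with $f=[X_1,X_2]$, write $1=\sum a_i u_i d_i$, and show that $J\supseteq A\,[A,A]^2\,A$ via the Herstein-type identity $2u\,a\,u = u\circ(u\circ a) - u^2\circ a$. This identity holds in any ring, and it keeps everything inside Jordan expressions once $a$ is replaced by a commutator-generated element. From there, unitality gives $J=A$.
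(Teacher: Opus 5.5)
\textbf{There is a genuine gap.} Your first step agrees with the paper: applying Lemma \ref{1600} with $f=[X_1,X_2]$ gives $1=\sum_i a_i[b_i,c_i]d_i$. Your observation that $J$ is a complex subspace with $2uv=u\circ v+[u,v]$ is also the paper's argument that $J$ is the subring generated by $[A,A]$.

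The gap is the decisive step: that $J$ contains a two-sided ideal of $A$ containing $1$. Your identity $2uau=u\circ(u\circ a)-u^2\circ a$ is correct, but it puts $uau$ in $J$ only when $u\circ a\in J$, essentially only when $a\in J$. That case is already trivial because $J$ is a subring, and the identity never reaches elements $a[b,c][d,e]a'$ with arbitrary outer factors.

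No identity-chasing can give this step, because the inclusion you aim at, $A[A,A]^2A\subseteq J$, is not a ring identity. In the exterior algebra $\Lambda(\mathbb{C}^5)$ the even part is central and odd elements anticommute. Hence $[A,A]=\Lambda^2\oplus\Lambda^4$, which is closed under $\circ$, so $J=\Lambda^2\oplus\Lambda^4$. Yet $[e_1,e_2][e_3,e_4]e_5=4e_1e_2e_3e_4e_5\notin J$. This algebra has a quotient $\mathbb{C}$, so it is not a counterexample to the lemma. It does show that this step must exploit that the ideal generated by $[A,A]$ is all of $A$, or else aim at a smaller ideal. Passing from ``$[A,A]$ generates $A$ as an ideal'' to ``$[A,A]$ generates $A$ as a ring'' is exactly the nontrivial content of \cite[Theorem 3.4]{GarThi}, which the paper invokes here.

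A short way to close the gap is Herstein's Lie-ideal trick, with the arbitrary factor placed outside a commutator of two elements of $J$. Since $J$ is a subring and $[J,A]\subseteq[A,A]\subseteq J$, for $x,y\in J$ and $r,s\in A$ we have
\[
[x,y]r=[x,yr]-y[x,r],\quad r[x,y]=[rx,y]-[r,y]x,\quad r[x,y]s=\bigl[r[x,y],s\bigr]+sr[x,y],
\]
and each right-hand side lies in $J$. Hence $J$ contains the ideal generated by $[J,J]$, which contains $\bigl[[A,A],[A,A]\bigr]$.

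This ideal is $A$. Otherwise it lies in a maximal ideal $M$, and $A/M$ is a simple Banach algebra satisfying the degree-$4$ identity $[[X_1,X_2],[X_3,X_4]]$. By Kaplansky's theorem, $A/M\cong\mathbb{C}$ or $M_2(\mathbb{C})$. The first is excluded by hypothesis, and the second because $[[e_{12},e_{21}],[e_{11},e_{12}]]=2e_{12}\neq 0$. You cannot quote Lemma \ref{1600} verbatim here, since in degree $4$ it would also require excluding $M_2(\mathbb{C})$; you have to rerun its proof.

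This gives $J=A$. It replaces the Gardella--Thiel citation with an elementary argument, at the cost of a second application of Kaplansky's theorem.
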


\begin{proof}
Let $J$ be the Jordan subring of $A$ generated by $[A,A]$.
If $x,y\in J$, then
\[
2xy=[x,y]+x\circ y\in [A,A]+J\circ J\subseteq J+J\circ J\subseteq J.
\]
This clearly forces that $J$ is the subring of $A$ generated by $[A,A]$.

By applying Lemma \ref{1600} to the polynomial $f(X_1,X_2)=[X_1,X_2]$, we deduce that
the ideal of $A$ generated by $[A,A]$ is equal to $A$.
Hence, it follows from \cite[Theorem 3.4]{GarThi} that $A=J$. 
\end{proof}

\begin{lemma}\label{1601}
Let $f\in\mathbb{C}\langle\mathcal{X}\rangle$, and
let $A$ be a unital complex Banach algebra with no 
quotient isomorphic to $M_n(\mathbb{C})$ with $n\le\tfrac{1}{2}(\text{\rm deg}(f)+1)$.
Then 
\[
[A,A]\subseteq \text{\rm span}\,f(A),
\]
and the Jordan subring of $A$ generated by $f(A)$ is equal to $A$.
\end{lemma}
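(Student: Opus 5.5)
The plan is to prove the two assertions in order: first that $\operatorname{span} f(A)$ is a Lie ideal of $A$ containing $[A,A]$, and then to deduce the Jordan-subring statement from Lemma \ref{1602}. Write $d=\deg(f)$; we may assume $f\neq 0$ and that $f$ has no constant term, since otherwise $1\in\operatorname{span} f(A)$ and everything is easier. Set $L=\operatorname{span} f(A)$.

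\emph{Step 1: $L$ is a Lie ideal.} Fix $a\in A$ and the inner derivation $\delta=[a,\cdot\,]$. For $x_1,\dots,x_n\in A$, the map $t\mapsto f(x_1+t\delta(x_1),\dots,x_n+t\delta(x_n))$ is a polynomial in $t$ of degree at most $d$ with coefficients in $A$. Since $f$ has no constant term and $\delta$ is a derivation, its coefficient of $t$ is exactly $\delta(f(x_1,\dots,x_n))=[a,f(x_1,\dots,x_n)]$. Evaluating at $d+1$ distinct complex values of $t$ and inverting the Vandermonde matrix writes each coefficient as a complex linear combination of elements of $f(A)$. Hence $[a,f(A)]\subseteq L$, so $[A,L]\subseteq L$.

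\emph{Step 2: the ideal generated by $[L,A]$ is $A$.} Apply Lemma \ref{1600} to the polynomial $g(X_1,\dots,X_{n+1})=[f(X_1,\dots,X_n),X_{n+1}]$. It is nonzero: a nonzero polynomial without constant term is not central in the free algebra. Its degree is $d+1$, so the hypothesis of Lemma \ref{1601} (no quotient $M_k(\mathbb C)$ with $k\le\tfrac12(d+1)$) is exactly what Lemma \ref{1600} needs. Thus $A$ equals the ideal generated by $[f(A),A]$, and therefore $A=A[L,A]A$.

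\emph{Step 3: $[A,A]\subseteq L$.} For a Lie ideal $L$ of a unital algebra one has $[A[L,A]A,A]\subseteq L$. I would verify this directly from identities such as $[u,xy]=[ux,y]+[yu,x]$ together with repeated use of $[L,A]\subseteq L$, or quote the standard Herstein-type lemma. Combined with Step 2 this gives $[A,A]=[A[L,A]A,A]\subseteq L$. This is the step I expect to be the main obstacle: the commutator manipulations have to handle products $x[u,y]z$ with both outer factors present. If the direct computation gets messy, I would instead pass to the ideal $I=A[L,L]A$, use the known inclusion $[I,A]\subseteq L$, and show $I=A$ via Lemma \ref{1600} applied to $[f(X),[f(Y),Z]]$-type polynomials. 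However, that route needs a degree bound the hypothesis may not supply, so the first route is preferable.

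\emph{Step 4: the Jordan subring.} Let $J$ be the Jordan subring generated by $f(A)$. By Lemma \ref{1602} (the hypothesis excludes the quotient $\mathbb C$, the case $k=1$) it suffices to show $[A,A]\subseteq J$. The delicate point is that $J$ is only an additive subgroup, whereas Step 3 gives $[A,A]$ inside the \emph{complex} span of $f(A)$. I would handle this as in the proof of Lemma \ref{1602}: first note $2xy\in J$ for $x,y\in J$ once $[J,J]\subseteq J$, so $J$ is essentially a subring. Then show that $J$ absorbs complex scalars on commutators, by writing $\alpha[x,y]=[\alpha x,y]$ and re-running the Vandermonde extraction of Step 1 with suitably chosen nodes so that only integer combinations of values of $f$ are needed. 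I flag this as a secondary technical point to be checked carefully.
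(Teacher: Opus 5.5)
\textbf{Verdict: the outline can be completed, but Step 3 rests on a false lemma, so as written the proof has a genuine gap.}

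Your Steps 2 and 4 follow the paper: it applies Lemma \ref{1600} to $[f(X_1,\dots,X_n),X_{n+1}]$ and finishes with Lemma \ref{1602}. Your Steps 1 and 3 try to reprove \cite[Theorem 2.7]{Bre}, which the paper simply quotes. Step 1 is the standard argument and is correct.

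\emph{Why Step 3 fails.} The inclusion $[A[L,A]A,A]\subseteq L$ does \emph{not} hold for arbitrary Lie ideals of unital algebras. So neither a direct commutator computation nor a Herstein-type lemma can produce it. Here is a counterexample.
\begin{itemize}
\item Take $A=\mathbb{C}\langle x,y_1,y_2,y_3\rangle$ and let $L$ be the Lie ideal generated by $x$.
\item The component of $L$ of degree one in each of $x,y_1,y_2,y_3$ is spanned by the commutators $[m_1,[m_2,\dots,[m_r,x]\dots]]$. Here $m_1,\dots,m_r$ are words in the $y_i$ that together use each $y_i$ exactly once.
\item Define a linear functional that equals $1$ on the monomials $y_1xy_3y_2$ and $y_2y_3xy_1$, equals $-1$ on $y_1xy_2y_3$ and $y_3y_2xy_1$, and vanishes on all other monomials. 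It kills every commutator in the spanning set above.
\item Its value at $[y_3,y_2[y_1,x]]$ is $1$. This element lies in $[A[L,A]A,A]$, since $[y_1,x]\in[L,A]$.
\end{itemize}
The classical Herstein lemma is $[A[L,L]A,A]\subseteq L$, with $[L,L]$ rather than $[L,A]$. As you noticed yourself, Lemma \ref{1600} cannot give $A[L,L]A=A$ under the stated degree hypothesis.

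\emph{How to repair it.} You only need the weaker statement: if $A[L,A]A=A$, then $[A,A]\subseteq L$. This is true, and the missing idea is to pass to $T=\{t\in A:[t,A]\subseteq L\}$.
\begin{itemize}
\item $T$ is a Lie ideal containing $L$. It is also a subring, since $[st,r]=[s,tr]+[t,rs]$.
\item For a subring that is a Lie ideal one checks $A[T,T]A\subseteq T$. Hence $I=A[T,T]A$ satisfies $[I,A]\subseteq L$.
\item In $\bar A=A/I$ the image $\bar T$ is a commutative Lie ideal. So for $s\in\bar T$ the derivation $d=[s,\cdot\,]$ satisfies $d^2=0$. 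Then $2d(u)d(v)=d^2(uv)=0$, and therefore $d(\bar A)\bar A\,d(\bar A)=0$.
\item Thus the ideal generated by $[\bar T,\bar A]$ is a sum of square-zero ideals, hence nil. But it contains the image of $A[L,A]A=A$, and so contains $\bar 1$.
\item Therefore $I=A$, and $[A,A]=[I,A]\subseteq L$.
\end{itemize}

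\emph{On Step 4.} Vandermonde inversion at integer nodes only places $Nf_k(x)$ in the additive group generated by $f(A)$. Here $f_k$ are the homogeneous components of $f-f(0)$ and $N$ is the Vandermonde determinant. To finish you also need $f_k(x)=Nf_k(N^{-1/k}x)$ and $\mu f_k(x)=f_k(\mu^{1/k}x)$. You should apply the argument above to $f-f(0)$, which has the same set $[f(A),A]$. The paper passes over this point without comment. That is harmless there, because it only uses the lemma for multilinear $f$.
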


\begin{proof}
By applying Lemma \ref{1600} to the polynomial 
\[
g(x_1,\ldots,x_n,x_{n+1})=[f(x_1,\ldots,x_n),x_{n+1}],
\]
we obtain that the ideal of $A$ generated by $[f(A),A]$ is equal to $A$.
From \cite[Theorem 2.7]{Bre} (together with the comment after that theorem) we see that
\[
[A,A]\subseteq \text{span}\,f(A),
\]
which gives the first assertion of the lemma. 
Consequently, the Jordan subring  of $A$ generated by $[A,A]$ is contained in the Jordan subring of $A$
generated by $f(A)$.  On account of Lemma \ref{1602}, the former is equal to $A$.
\end{proof}

\begin{lemma}\label{1603}
Let $A$ be a unital complex Banach algebra with no quotient isomorphic to $\mathbb{C}$ or $M_2(\mathbb{C})$.
Then the Jordan subring of $A$ generated by $\bigl[[A,A],[[A,A],A]\circ[[A,A],A]\bigr]$ is equal to $A$.
\end{lemma}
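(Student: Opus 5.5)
The plan is to apply Lemma \ref{1601} to a single polynomial of low degree that generates the set in question. The natural candidate is
\[
f(X_1,\dots,X_8)=\bigl[[X_1,X_2],\,[[X_3,X_4],X_5]\circ[[X_6,X_7],X_8]\bigr],
\]
which is nonzero and has degree $8$. Its value set $f(A)$ is contained in $\bigl[[A,A],[[A,A],A]\circ[[A,A],A]\bigr]$, so it suffices to show that the Jordan subring generated by $f(A)$ is $A$. The difficulty is that Lemma \ref{1601} with $\deg f=8$ would require excluding quotients $M_n(\mathbb{C})$ for every $n\le 4$, whereas we only exclude $n=1,2$. So a direct application is too crude, and the degree bound must be replaced by an actual check on $M_3$ and $M_4$.

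The proof of Lemma \ref{1601} uses only one fact: the ideal generated by $[f(A),A]$ is all of $A$. By Lemma \ref{1600}'s argument, this fails only if some simple quotient $A/M$ satisfies the identity $[f,X_9]$. By Kaplansky's theorem such a quotient is $M_n(\mathbb{C})$ for some $n$. I would therefore redo the argument of Lemma \ref{1601} with this refinement: the ideal generated by $[f(A),A]$ equals $A$ unless $A$ has a quotient $M_n(\mathbb{C})$ on which $f$ is central-valued. This reduces the lemma to showing that for $n\ge 3$ the polynomial $f$ is not central on $M_n(\mathbb{C})$. Quotients with $n=1,2$ are excluded by hypothesis; on $M_2$, $f$ really is central, which is why $M_2$ must be excluded.

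Granting that, \cite[Theorem 2.7]{Bre} gives $[A,A]\subseteq\operatorname{span} f(A)$. Since $f$ is multilinear and the Jordan subring is an additive group, I would pass from the span to the additive group generated by $f(A)$ by absorbing complex scalars into one variable: $\alpha f(x_1,\dots)=f(\alpha x_1,\dots)$. Hence the Jordan subring generated by $f(A)$ contains $[A,A]$, and Lemma \ref{1602} (using that $A$ has no quotient $\mathbb{C}$) gives that it equals $A$.

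The main obstacle is the explicit non-centrality check on $M_n(\mathbb{C})$ for $n\ge3$. I would first embed $M_3$ into $M_n$ as a corner block, padding the other variables with zeros. Then, with matrix units, I would take $[e_{12},e_{21}]=e_{11}-e_{22}$ and $[[e_{13},e_{31}],e_{12}]=(e_{11}-e_{33})e_{12}-e_{12}(e_{11}-e_{33})=0$. Since that choice vanishes, I would instead search for inner commutators $u=[[a,b],c]$ and $v=[[d,e],g]$ such that $u\circ v$ is a non-scalar diagonal matrix, for example $u=e_{13}$ and $v=e_{31}$, each obtained as $[[e_{11}-e_{33},\ast],\ast]$-type expressions. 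This gives $u\circ v=e_{11}+e_{33}$, and commuting with $[e_{12},e_{21}]$ or with $[e_{23},e_{32}]$ yields a nonzero off-diagonal, hence non-central, value after a suitable choice. Verifying that such choices exist within the commutator pattern is the computational core. Any non-central value inside the corner remains non-central in $M_n$.
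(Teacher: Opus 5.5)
Your route is valid and genuinely different from the paper's. The paper never leaves the degree range where Lemma~\ref{1601} applies as stated. It uses three polynomials, $[[X_1,X_2],[X_3,X_4]]$, $[X_1,X_2]\circ[X_3,X_4]$ and $[[X_1,X_2],X_3]$, each of degree at most $4$, so only $M_1(\mathbb{C})$ and $M_2(\mathbb{C})$ have to be excluded. It gets $[A,A]$ inside the span of each image and chains
\[
[A,A]\subseteq\bigl[[A,A],[A,A]\bigr]\subseteq\bigl[[A,A],[A,A]\circ[A,A]\bigr]\subseteq\bigl[[A,A],[[A,A],A]\circ[[A,A],A]\bigr],
\]
finishing with Lemma~\ref{1602}. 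This peels off one layer of the nested expression at a time and needs no matrix computation at all. You instead treat the whole expression as one degree-$8$ polynomial and reopen the proof of Lemma~\ref{1601}, replacing Kaplansky's degree bound by a direct check that $f$ is not central on $M_n(\mathbb{C})$ for $n\ge 3$. The rest of your argument is sound: the maximal-ideal/Kaplansky reduction, the use of \cite[Theorem 2.7]{Bre}, absorbing scalars by multilinearity, and the final appeal to Lemma~\ref{1602}. It even gives the slightly sharper fact that $[A,A]$ lies in the additive group generated by evaluations of a single polynomial. The price is that explicit check.

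As you sketched it, that check is wrong in two places. First, $[[e_{13},e_{31}],e_{12}]=(e_{11}-e_{33})e_{12}-e_{12}(e_{11}-e_{33})=e_{12}$, not $0$. Second, commuting the diagonal matrix $e_{11}+e_{33}$ with $[e_{12},e_{21}]=e_{11}-e_{22}$ or with $[e_{23},e_{32}]=e_{22}-e_{33}$ gives $0$, since diagonal matrices commute. So neither suggested choice produces a nonzero value. A correct witness that keeps your $u,v$ is
\[
u=[[e_{11},e_{12}],e_{23}]=e_{13},\qquad v=[[e_{33},e_{32}],e_{21}]=e_{31},\qquad u\circ v=e_{11}+e_{33}.
\]
Taking $[X_1,X_2]=[e_{11},e_{12}]=e_{12}$ then gives $f=[e_{12},e_{11}+e_{33}]=-e_{12}$, which is not central. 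This holds in $M_n(\mathbb{C})$ for every $n\ge 3$, since the matrix units $e_{ij}$ with $i,j\le 3$ already lie in $M_n(\mathbb{C})$ and no padding argument is needed. With this repair your proof is complete.
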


\begin{proof}
Let $f,g,h\in\mathbb{C}\left\langle X_1,X_2,X_3,X_4 \right\rangle$ be defined by
\begin{equation*}
\begin{split}
f(X_1,X_2,X_3,X_4)&=[[X_1,X_2],[X_3,X_4]]
,\\
g(X_1,X_2,X_3,X_4)&=[X_1,X_2]\circ [X_3,X_4],\\
h(X_1,X_2,X_3)&=[[X_1,X_2],X_3].
\end{split}
\end{equation*}
According to Lemma \ref{1601}, $[A,A]$ is contained in each of the linear spaces $\text{span}\, f(A)$,
$\text{span}\, g(A)$, and $\text{span}\, h(A)$.
We thus have
\begin{equation*}
\begin{split}
[A,A]
&\subseteq
[[A,A],[A,A]]\\
&\subseteq
\bigl[[A,A],[A,A]\circ[A,A]\bigr]\\
&\subseteq
\bigl[[A,A],[[A,A],A]\circ[[A,A],A]\bigr],
\end{split}
\end{equation*}
which gives the result when combined with Lemma \ref{1602}. 
\end{proof}

Let $A$ and $B$ be two algebras. 
A mapping $\Psi:A\to B$ is said to be the {\em  direct sum of a homomorphism and  an anti-homomorphism} if
there exist a central idempotent $e\in B$ such that $e\Psi$ is a homomorphism
(i.e. $e\Psi$ is merely multiplicative, that is, $e\Psi(xy)=e\Psi(x)\Psi(y)$ for all 
  $x,y\in A$) and $(1-e)\Psi$ is an anti-homomorphism
  (i.e., $(1-e)\Psi(xy)=(1-e)\Psi(y)\Psi(x)$ for all 
  $x,y\in A$).

We now have enough information to prove the main result of the paper.

\begin{theorem}\label{mt}
Let $A$ and $B$ be unital complex Banach algebras, and 
let $\Phi\colon A\to B$ be a surjective additive mapping satisfying $[\Phi(x^2),\Phi(x)] = 0$ for each $x\in A$. 
Suppose that $A$ and $B$ have no quotients isomorphic to $\mathbb{C}$ or $M_2(\mathbb{C})$, and further 
suppose that $B$ is semisimple.
Then there exist a surjective Jordan  homomorphism $\Psi\colon A\to B$,
an invertible element $\lambda\in\mathcal{Z}(B)$, and 
an additive mapping $\zeta\colon A\to\mathcal{Z}(B)$ such that 
\[
\Phi(x)=\lambda\Psi(x)+\zeta(x) \quad\forall x\in A.
\]
Furthermore, $\Psi$ is the direct sum of a homomorphism and  an anti-homo\-morphism.
\end{theorem}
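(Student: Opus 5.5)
The plan is to work first modulo each primitive ideal of $B$, then glue the local data together, exploiting semisimplicity and the uniqueness in Remark \ref{rem22}(a).

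\emph{Local step.} Fix a primitive ideal $P$ of $B$, let $\pi_P\colon B\to B/P$ be the quotient map, and put $\Phi_P=\pi_P\circ\Phi\colon A\to B/P$. This map is surjective, additive and satisfies $[\Phi_P(x^2),\Phi_P(x)]=0$. Take $J=A$ and $Q=Q_{ml}(B/P)$ in Theorem \ref{t0}; then $\mathcal Z(Q)=\mathbb C$ by Remark \ref{rem22}(c).
\begin{itemize}
\item By Lemma \ref{1603}, $K=A$, hence $L=A$ (since $1\circ 1=2$ generates everything), and so $\Phi_P(L)=B/P\not\subseteq\mathbb C$. Indeed, $B/P$ is not commutative: otherwise $B/P\cong\mathbb C$, which is excluded.
\item By Remark \ref{rem22}(d), $B/P$ is $3$-free, because it is not $\mathbb C$ or $M_2(\mathbb C)$. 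These quotients are excluded, since a primitive Banach algebra isomorphic to a matrix algebra is a simple quotient of $B$.
\end{itemize}
So $\Phi_P=\lambda_P\Psi_P+\zeta_P$ with $\lambda_P\in\mathbb C^\times$. Because $\mathbb C$ is central and $\Phi_P$ is surjective, $\Psi_P(A)+\mathbb C=B/P$, and Herstein-type arguments show that $\Psi_P$ is a homomorphism or an anti-homomorphism onto $B/P$ (a Jordan homomorphism onto a prime ring). Uniqueness of the representation fixes $\lambda_P$ only up to normalization. I would normalize using $\Psi_P(1)=1$ (Jordan homomorphisms onto, modulo scalars, send $1$ to an idempotent commuting with the image, hence $1$). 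This pins down $\zeta_P(1)$ and $\lambda_P$.

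\emph{Gluing step (the main obstacle).} We need $\lambda\in\mathcal Z(B)$ with $\pi_P(\lambda)=\lambda_P$ for all $P$. I would construct $\lambda$ algebraically from $\Phi$ itself. The identity $\lambda_P^{-1}(\Phi_P(x)-\zeta_P(x))$ being a Jordan homomorphism gives
\[
\lambda_P\Phi_P(x\circ y)-\Phi_P(x)\circ\Phi_P(y)\in\mathbb C \quad\text{modulo } P.
\]
Then take commutators with arbitrary $\Phi(z)$ to kill the central terms:
\[
\bigl[\lambda_P\Phi(x\circ y)-\Phi(x)\circ\Phi(y),\Phi(z)\bigr]\in P .
\]
By Lemma \ref{1603} and Lemma \ref{1601}, finitely many elements of the form $[\Phi(x\circ y),\Phi(z)]$ generate $B$ as an ideal. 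Writing $1=\sum_i b_i[\Phi(x_i\circ y_i),\Phi(z_i)]c_i$, I would define
\[
\lambda=\sum_i b_i[\Phi(x_i)\circ\Phi(y_i),\Phi(z_i)]c_i .
\]
Then $\pi_P(\lambda)=\lambda_P$ for every $P$. Since $B$ is semisimple, $\lambda$ is central: $[\lambda,b]$ lies in every $P$. It is also invertible: it is invertible modulo every primitive ideal, so no maximal left ideal contains it. Next set $\Psi'(x)=\lambda^{-1}$ times a suitably centralized $\Phi$. Here $\zeta$ must be built similarly: $\zeta_P(x)$ is the scalar part, recovered as $\lambda_P\Psi_P(x)$ versus $\Phi_P(x)$. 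I would define $\Psi$ by requiring
\[
\lambda\Psi(x)=\tfrac12\bigl(\lambda^{-1}\Phi(x)\circ\Phi(1)\bigr)-\ldots ,
\]
or more cleanly by using the identity $\lambda_P\Psi_P(x)=\lambda_P^{-1}\bigl(\Phi_P(x)\circ\Phi_P(1)-\Phi_P(x)\ldots\bigr)$. A simpler route is to observe $\zeta_P(x)=\Phi_P(x)-\lambda_P\Psi_P(x)$ and that $x\mapsto\Phi(x^2)$-type formulas express $\Psi_P(x)$ polynomially in $\Phi_P$ and $\lambda_P$ (e.g.\ $\Psi_P(x\circ y)=\lambda_P^{-2}\bigl(\Phi_P(x)-\zeta_P(x)\bigr)\circ\bigl(\Phi_P(y)-\zeta_P(y)\bigr)$, combined with $A=$ Jordan ring generated by such products). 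Each central scalar $\zeta_P(x)$ would again be realized globally by the same ideal-generation trick, applied to a polynomial expression whose image mod $P$ is $\zeta_P(x)$. Then semisimplicity gives $\Psi$ additive and a Jordan homomorphism, because all identities hold mod every $P$.

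\emph{Final step.} For surjectivity of $\Psi$, use $\Phi(A)=B$ and $\zeta(A)\subseteq\mathcal Z(B)$, so $B=\Psi(A)+\mathcal Z(B)$. Then show $\mathcal Z(B)\subseteq\Psi(A)$, using that $\Psi(A)$ is a Jordan ring whose generated ring is all of $B$ (Lemma \ref{1602}). For the direct-sum decomposition, apply the Herstein/Jacobson–Rickart identity $(\Psi(xy)-\Psi(x)\Psi(y))B(\Psi(xy)-\Psi(y)\Psi(x))=0$ for Jordan epimorphisms onto semiprime rings. This yields ideals $U,V$ with $U\cap V=0$ and $U+V$ essential, and then a central idempotent $e$. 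Its existence should use the local dichotomy at each $P$ together with a gluing argument. This gluing is again where I expect the real difficulty.
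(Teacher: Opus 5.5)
\textbf{There is a genuine gap in the gluing of $\lambda$.} Your overall architecture is the paper's: apply Theorem \ref{t0} modulo each primitive ideal, then glue the local data using the ideal-generation lemmas and semisimplicity. Your local step is also fine. But the gluing of $\lambda$ rests on a false identity. Expanding $\Psi_P(x\circ y)=\Psi_P(x)\circ\Psi_P(y)$ with $\Psi_P=\lambda_P^{-1}(\Phi_P-\zeta_P)$ gives
\[
\lambda_P\Phi_P(x\circ y)-\Phi_P(x)\circ\Phi_P(y)=-2\zeta_P(x)\Phi_P(y)-2\zeta_P(y)\Phi_P(x)+\text{scalar}.
\]
The cross terms are not central, so a commutator with $\Phi(z)$ does not remove them. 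Concretely, $A=B=M_3(\mathbb C)$ with $\Phi(x)=x+\mathrm{tr}(x)1$ satisfies all hypotheses with $\lambda_P=1$. Yet $x=1$, $y=E_{12}$, $z=E_{21}$ give $\bigl[\Phi(x\circ y)-\Phi(x)\circ\Phi(y),\Phi(z)\bigr]=-6(E_{11}-E_{22})\neq 0$. So your element $\lambda$ need not satisfy $\pi_P(\lambda)=\lambda_P$.

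The missing idea is how to eliminate these linear terms. Take $y=x$, so that the only non-central error is $-2\zeta_P(x)\Phi_P(x)$, a scalar multiple of $\Phi_P(x)$. Commute with an arbitrary $c\in B$, and then commute once more with $[\Phi(x),c]$, which annihilates $\zeta_P(x)[\Phi(x),c]$. This yields
\[
\bigl[[\Phi(x^2),c],[\Phi(x),c]\bigr]\equiv_P\lambda_P^{-1}\bigl[[\Phi(x)^2,c],[\Phi(x),c]\bigr].
\]
Now apply Lemma \ref{1600} to the degree-$4$ polynomial $[[X_1^2,X_2],[X_1,X_2]]$; this is where $M_2(\mathbb C)$ must be excluded. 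It gives $1=\sum_k a_k[[b_k^2,c_k],[b_k,c_k]]d_k$. Choosing $x_k$ with $\Phi(x_k)=b_k$, the element $\sum_k a_k[[\Phi(x_k^2),c_k],[\Phi(x_k),c_k]]d_k$ is central and invertible, and it glues the $\lambda_P^{-1}$.

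\textbf{The remaining steps are not yet arguments either.}

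\emph{Gluing $\zeta$.} The scalars $\zeta_P(x)$ cannot be glued one $x$ at a time by the same trick. For fixed $x$, the available relation is $2\zeta_P(x)[\Phi(x),c]\equiv_P[\Phi(x)^2,c]-\lambda[\Phi(x^2),c]$. This pins down $\zeta_P(x)$ globally only when $[\Phi(x),B]$ generates $B$ as an ideal, which fails, for instance, when $\Phi(x)$ is central. The paper proceeds as follows.
\begin{enumerate}
\item[(a)] Let $A_0$ be the set of those $x$ for which $\{\zeta_P(x)\}_P$ glues.
\item[(b)] $A_0$ is a Jordan subring, by the Jordan identity modulo each $P$ and the already glued $\lambda$.
\item[(c)] Every $[[x,y],z]$ lies in $A_0$, because there $\zeta_P([[x,y],z])$ is the image of the global element $\Phi([[x,y],z])-\lambda^{-2}[[\Phi(x),\Phi(y)],\Phi(z)]$.
\item[(d)] Hence $A_0=A$ by Lemma \ref{1603}.
\end{enumerate}

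\emph{Surjectivity.} Knowing that $\Psi(A)$ is a Jordan ring generating $B$ as a ring does not give $\Psi(A)=B$. What works is $[[B,B],B]=\Psi([[A,A],A])\subseteq\Psi(A)$, combined with the fact, from the proof of Lemma \ref{1603}, that the Jordan subring generated by $[[B,B],B]$ is $B$.

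\emph{Central idempotent.} The paper does not produce it by a hand-made gluing. It comes from known structure theorems (Sinclair; Bre\v{s}ar--Zalar) for Jordan epimorphisms onto semisimple Banach algebras. Your argument with $U\oplus V$ essential, on its own, only gives an idempotent in the extended centroid rather than in $B$.
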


\begin{proof}
The proof consists in the construction of an invertible element $\alpha$ in $\mathcal{Z}(B)$ and 
an additive mapping $\beta\colon A\to\mathcal{Z}(B)$
such that the mapping
\[
x\mapsto\alpha\Phi(x)+\beta(x)
\]
is a Jordan  homomorphism from $A$ to $B$.

Let $\text{Prim}(B)$ denote the set of all primitive ideals of $B$.
For each $P\in\text{Prim}(B)$, we write $\pi_P$ for the quotient mapping from $B$ onto $B/P$, and, for all $a,b\in B$,
we write $a\equiv_P b$ if $a-b\in P$.

Take  $P\in\text{Prim}(B)$. Clearly,
$\pi_P\Phi$ is a surjective additive mapping from $A$ onto $B/P$ that satisfies
$[(\pi_P\Phi)(x^2),(\pi_P\Phi)(x)] = 0$ for each $x\in A$. 
 Theorem \ref{t0},
 along with Lemma \ref{1603} (which shows that $L=A$ in the mentioned theorem) and the information provided in Remark \ref{rem22}, 
assures the existence of a Jordan homomorphism $\Psi_P\colon A\to Q_{ml}(B/P)$, 
an element $\lambda_P\in\mathbb{C}\setminus\{0\}$, and 
an additive functional $\zeta_P\colon A\to\mathbb{C}$ 
such that 
\[
(\pi_P\Phi)(x)=\lambda_P\Psi_P(x)+\zeta_P(x)\pi_P(1) \quad\forall x\in A.
\]
Set $\alpha_P=\lambda_P^{-1}$ and $\beta_P=-\lambda_P^{-1}\zeta_P$. Then
\[
\Psi_P(x)=\alpha_P(\pi_P\Phi)(x)+\beta_P(x)\pi_P(1)\quad\forall x\in A.
\]
(Thus, $\Psi_P$ actually maps to $B/P$).

Our next objective is to show that there exists an element $\alpha\in B$ such that
\begin{equation}\label{1744}
\pi_P(\alpha)=\alpha_P\pi_P(1)\quad\forall P\in\text{Prim}(B).
\end{equation}
From the semisimplicity of $B$ it may be concluded that such an element is necessarily unique and 
belongs to the centre of $B$.

For each $x\in A$ and each $P\in\text{Prim}(B)$, we have
\begin{equation*}
\begin{split}
\alpha_P\Phi(x^2)+\beta_P(x^2)1
&\equiv_P
\bigl(\alpha_P\Phi(x)+\beta_P(x)1\bigr)^2\\
&=
\alpha_P^2\Phi(x)^2+
2\alpha_P\beta_P(x)\Phi(x)+
\beta_P(x)^2 1.
\end{split}
\end{equation*}
Commuting with an arbitrary 
 $c\in B$,  we arrive at
\[
\alpha_P[\Phi(x^2),c]\equiv_P
\alpha_P^2[\Phi(x)^2,c]+
2\alpha_P\beta_P(x)[\Phi(x),c],
\]
whence
\[
[\Phi(x^2),c]\equiv_P
\alpha_P[\Phi(x)^2,c]+
2\beta_P(y)[\Phi(x),c].
\]
Now, commuting with   $[\Phi(x),c]$, we obtain
\begin{equation}\label{1743}
\bigl[[\Phi(x^2),c],[\Phi(x),c]\bigr]\equiv_P
\alpha_P\bigl[[\Phi(x)^2,c],[\Phi(x),c]\bigr].
\end{equation}
By applying Lemma \ref{1600} to 
the polynomial $$f(X_1,X_2)=\bigl[[X_1^2,X_2],[X_1,X_2]\bigr]\in\mathbb{C}\langle X_1,X_2\rangle,$$
we obtain that the ideal of $B$ generated by $f(B)$ is equal to $B$. Hence, there exist an $n\in\mathbb{N}$
and $a_k,b_k,c_k,d_k\in B$ $(k=1,\ldots,n)$ such that
\begin{equation}\label{1740}
1=\sum_{k=1}^n a_k\bigl[[b_k^2,c_k],[b_k,c_k]\bigr]d_k.
\end{equation}
For each $k\in\{1,\ldots,n\}$, by the surjectivity of $\Phi$, we can take $x_k\in A$ such that $\Phi(x_k)=b_k$ and 
define
\[
\alpha=\sum_{k=1}^n a_k\bigl[[\Phi(x_k^2),c_k],[\Phi(x_k),c_k]\bigr]d_k\in B.
\] 
From \eqref{1743} we deduce that
\begin{equation*}
\begin{split}
\pi_P(\alpha)
&=
\sum_{k=1}^n\pi_P(a_k)\pi_P\bigl(\bigl[[\Phi(x_k^2),c_k],[\Phi(x_k),c_k]\bigr]\bigr)\pi_P(d_k)\\
&=
\sum_{k=1}^n\pi_P(a_k)\alpha_P\pi_P\bigl(\bigl[[b_k^2,c_k],[b_k,c_k]\bigr]\bigr)\pi_P(d_k)\\
&=
\alpha_P\pi_P\Bigl(\sum_{k=1}^n a_k\bigl[[b_k^2,c_k],[b_k,c_k]\bigr]d_k\Bigr) =
\alpha_P\pi_P(1),
\end{split}
\end{equation*}
for each $P\in\text{Prim}(B)$. This proves \eqref{1744}.

We next show that $\alpha$ is invertible. Otherwise, $\alpha B$ is an ideal of $B$ different from $B$, 
and therefore we can take a maximal ideal $M$ of $B$ such that $\alpha B\subseteq M$.
Since $\alpha\in M$, we have $\pi_M(\alpha)=0$, but, on the other hand, $\pi_M(\alpha)=\alpha_M\neq 0$, a contradiction.

Having disposed of the element $\alpha$, we can now address the problem of finding the mapping $\beta$ through the family 
$\{\beta_P\}_{P\in\text{Prim}(B)}$.
We define the set $A_0$ to consists of those $x\in A$ for which
there exists an element $\beta(x)\in B$ such that
\[
\beta_P(x)\pi_P(1)=\pi_P(\beta(x))\quad\forall P\in\text{Prim}(B).
\]
It should be pointed out that such an element $\beta(x)$ is necessarily unique and belongs to $\mathcal{Z}(B)$.

We claim that $A_0$ is a Jordan subring of $A$. It is straightforward to check that
$A_0$ is an additive subgroup; moreover,
since for all $x,y\in A_0$,
\begin{align*}
\beta_P(x+y)\pi_P(1) =\beta_P(x)\pi_P(1)+\beta_P(y)\pi_P(1)
=
\pi_P(\beta(x) + \beta(y)), 
\end{align*}
we see that
$\beta(x+y) =\beta(x) +\beta(y)$, i.e.,
$\beta$ is an additive mapping.
Further, for all $x,y\in A_0$,
\begin{align*}
   &\alpha\Phi(x\circ y)+\beta_P(x\circ y)1\\ \equiv&_P \bigl(\alpha\Phi(x)+\beta_P(x)1\bigr)\circ\bigl(\alpha\Phi(y)+\beta_P(y)1\bigr)\\\equiv&_P \bigl(\alpha\Phi(x)+\beta(x)1\bigr)\circ\bigl(\alpha\Phi(y)+\beta(y)1\bigr)\\
=& \alpha^2\Phi(x)\circ\Phi(y)+\alpha\beta(x)\Phi(y)+\alpha\beta(y)\Phi(x)+\beta(x)\beta(y)1,
\end{align*}
which shows that $\beta_P(x\circ y)\pi_P(1)$ coincides with
\begin{align*}  \pi_P\Bigl(\alpha^2\Phi(x)\circ\Phi(y) +
\alpha\beta(x)\Phi(y)+\alpha\beta(y)\Phi(x) + \beta(x)\beta(y)1-\alpha\Phi(x\circ y)
\Bigr),
\end{align*}
and therefore $x\circ y\in A_0$. This proves our claim.

Now, since 
\[
\bigl[[x,y],z\bigr]=(y\circ z)\circ x-y\circ(z\circ x)\quad\forall x,y,z\in A,
\]
and $\pi_P\circ(\alpha\Phi)+\beta_P\pi_P(1)$ is a Jordan homomorphism, we have
\begin{align*}
&\alpha\Phi\bigl([[x,y],z]\bigr)+\beta_P\bigl([[x,y],z]\bigr)1\\\equiv&_P
\bigl[
[\alpha\Phi(x)+\beta_P(x)1,\alpha\Phi(y)+\beta_P(y)1],\alpha\Phi(z)+\beta_P(z)1
\bigr]\\=&
\alpha^3\bigl[[\Phi(x),\Phi(y)],\Phi(z)\bigr],    
\end{align*}
and thus
\[
\beta_P\bigl([[x,y],z]\bigr)\pi_P(1)=
\pi_P\bigl(\alpha^3\bigl[[\Phi(x),\Phi(y)],\Phi(z)\bigr]-\alpha\Phi\bigl([[x,y],z]\bigr)\bigr),
\]
 showing that
\begin{equation}\label{1611}
\bigl[[x,y],z\bigr]\in A_0\quad\forall x,y,z\in A.
\end{equation}
Hence the Jordan subring $J$ of $A$ generated by $\bigl[[A,A],A\bigr]$ is contained in $A_0$.
However, Lemma \ref{1603} implies that $J=A$, which yields $A_0=A$.

Now define
\[
\Psi :=\alpha\Phi+\beta.
\]
Clearly, $\Psi$ is an additive mapping and since 
$\pi_P\Psi=\Psi_P$ is a Jordan  homomorphism, we have
\[
\pi_P\bigl(
\Psi(x\circ y)-\Psi(x)\circ\Psi(y)
\bigr)\!=\!
\Psi_P(x\circ y)-\Psi_P(x)\circ\Psi_P(y)=
0 \ \forall P\in\text{Prim}(B).
\]
Since $B$ is semisimple, we conclude that $\Psi$ is a Jordan homomorphism.

Note that by setting
\[
\lambda=\alpha^{-1}, \ \
\zeta=-\alpha^{-1}\beta,
\]
we arrive at the representation
\[
\Phi=\lambda\Psi+\zeta.
\]

Let us show that $\Psi$ is surjective.
Since $\Psi$ is a Jordan  homomorphism, we see that $\Psi(A)$ is a Jordan subring of $B$.
Further, since $\Phi$ is surjective, $\beta$ is centre-valued, and $\alpha$ is invertible, it follows that
\begin{align*}
\Psi\bigl([[A,A],A]\bigr) &=
\bigl[[\Psi(A),\Psi(A)],\Psi(A)\bigr]=
\bigl[[\alpha\Phi(A),\alpha\Phi(A)],\alpha\Phi(A)\bigr]\\
&=
\bigl[[\alpha B,\alpha B],\alpha B\bigr]=
\bigl[[B,B],B\bigr],
\end{align*}
and hence
\[
\bigl[[B,B],B\bigr]\subseteq\Psi(A).
\]
Consequently, the Jordan subring $H$ of $B$ generated by $[[B,B],B]$ is contained in $\Psi(A)$.
By Lemma \ref{1603} and the conclusion in its proof for the polynomial $h$, $H=B$ and hence $\Psi(A)=B$.

Using either 
 \cite[Theorem 2.1]{Sin} or
 \cite[Theorem 2.10]{BZ}, we see that $\Psi$ is the direct sum of a homomorphism and  an anti-homomorphism.
\end{proof}

\section{Examples}\label{s3}

This section presents a wide range of natural examples to which our main conclusion applies. The culminating point is the case of surjective additive mappings preserving commutativity only in one direction between von Neumann algebras with no central summands of type $I_1$ and $I_2$.

\begin{example}[Simple unital Banach algebras]

An obvious example of semisimple unital complex Banach algebra which has no quotients isomorphic to $\mathbb{C}$ or $M_2(\mathbb{C})$ is a simple unital Banach algebra $A$ with $\dim(A)>4$. It should be noted that, for a unital complex Banach algebra $A$, 
it does not matter if we think of simplicity in an algebraic sense, i.e. $0$ and $A$ are the only ideals of $A$, or in a
topological sense, i.e. $0$ and $A$ are the only closed ideals of $A$. Classical examples are the matrix algebra 
$M_n(\mathbb{C})$ for each $n\in\mathbb{N}$ $n\geq 3$, and 
the Calkin algebra $\mathcal{C}(H)$ on a separable infinite-dimensional complex Hilbert space $H$, but 
there are many others such as 
the Cuntz algebra $\mathcal{O}_n$ with $2\le n\le\infty$, 
$UHF$ algebras, noncommutative tori (cf. \cite{Bla}). In fact, the study of simple $C^*$-algebras is one of the main thrusts of the classification program of $C^*$-algebras.
\end{example}

\begin{example}[Properly infinite unital Banach algebras]\label{pi}

Let $A$ be a complex algebra.
Two idempotents $p,q\in A$ are \emph{algebraically equivalent},
written $p\approx q$, if there exist $a,b\in A$ such that $p=ab$ and $q=ba$. A unital complex algebra $A$ is \emph{properly infinite}
if there exist idempotents $p,q\in A$ such that $pq=0=qp$ and satisfy $p\approx 1\approx q$
(cf. \cite[Definitions 1.3.18 and 1.3.21]{D}).
The Banach algebra $\mathcal{B}(\mathcal{X})$ of all bounded linear operators on a complex Banach space $\mathcal{X}$ 
is properly infinite if and only if $\mathcal{X}$ contains a complemented subspace isomorphic to $\mathcal{X}\oplus\mathcal{X}$ 
(see \cite[Proposition 1.9]{L}).
The classical Banach spaces $C([0, 1])$, $c_0$, $\ell^p$ and $L^p([0, 1])$, where $1\le p\le\infty$, 
are isomorphic to the direct sum of two copies of themselves. Hence, they are examples of Banach spaces 
$\mathcal{X}$ for which $\mathcal{B}(\mathcal{X})$ is properly infinite.
For a $C^*$-algebra $A$,  rather than considering idempotents, one considers \emph{projections}.
Further, the natural equivalence of projections is that of \emph{Murray-von Neumann equivalence}, which says that
two projections $p,q\in A$ are equivalent, written $p\sim q$, if there exists $u\in A$ such that $p=u^*u$ and $q=uu^*$ 
(hence, $u$ is a partial isometry). Each idempotent is algebraically equivalent to a projection and, moreover, given
projections $p,q\in A$ we have $p\sim q$ if and only if $p\approx q$ (cf. \cite[Propositions 1.10.21(i) and 3.2.10]{D}).
By \cite[Proposition 2.4]{DH},
a unital $C^*$-algebra $A$ is properly infinite in the algebraic sense if and only if it is properly infinite
in the $C^*$ sense, i.e. there exist mutually orthogonal projections $p,q\in A$ such that $p\sim 1\sim q$.

Let $A$ be a properly infinite unital complex Banach algebra. We claim that $A$ has no quotients isomorphic to $M_d(\mathbb{C})$ for any positive integer $d$. To derive a contradiction, suppose that there exists a surjective homomorphism $\pi\colon A\to M_d(\mathbb{C})$
for some positive integer $d$.
Let $p,q\in A$ idempotents such that $pq=0=qp$ and $p\approx 1\approx q$.
Then 
$\pi(p)$ and $\pi(q)$ are idempotents of $M_d(\mathbb{C})$, 
$\pi(p)\pi(q)=0=\pi(q)\pi(p)$, and, 
$\pi(p)\approx 1\approx\pi(q)$ (because  $\pi(1)=1$),
which entails that $M_d(\mathbb{C})$ is a properly infinite $C^*$-algebra, a contradiction.
\end{example}

\begin{example}[Stability under unital homomorphisms]

Let $A$ and $B$ be unital complex Banach algebras, and let $\Phi\colon A\to B$ a unital homomorphism.

Suppose that $B$ has a quotient isomorphic to $M_d(\mathbb{C})$ for some $d\in\mathbb{N}$.
We claim that $A$ has  a quotient isomorphic to $M_n(\mathbb{C})$ for some $n\in\{1,\ldots,d\}$.
Let $\pi\colon B\to M_d(\mathbb{C})$ be a surjective homomorphism. Then $\ker(\pi\Phi)$ is
an ideal of $A$ different from $A$ (because $\pi(\Phi(1))=1$) and therefore there exists a maximal ideal $M$ of $A$ such that
$\ker(\pi\Phi)\subseteq M$. Since
$\frac{A}{M}\cong \frac{A/\ker(\pi\Phi)}{M/\ker(\pi\Phi)}$ and $\frac{A}{\ker(\pi\Phi)}\cong \pi(\Phi(A))\subseteq M_d(\mathbb{C})$,
we see that $\dim A/M\le d^2$  and hence $A/M\cong M_n(\mathbb{C})$ for some $n\in\{1,\ldots d\}$.

Consequently, if $A$ has no quotients isomorphic to $\mathbb{C}$ or $M_2(\mathbb{C})$, then
$B$ has no quotients isomorphic to $\mathbb{C}$ or $M_2(\mathbb{C})$.
This simple and deceptively innocuous observation turns out to be extremely powerful to construct unital Banach algebras
satisfying the conditions of Theorem \ref{mt}. To illustrate this, we will provide a wide range of examples below.
\begin{enumerate}
\item[\rm(i)]
\emph{Quotients.}
For each unital complex Banach algebra $A$ and each closed ideal $I$ of $A$ different from $A$, the quotient mapping
$\pi_I\colon A\to A/I$ is a unital homomorphism. Thus,  
if $A$ has no quotients isomorphic to $\mathbb{C}$ or $M_2(\mathbb{C})$, then
$A/I$ has no quotients isomorphic to to $\mathbb{C}$ or $M_2(\mathbb{C})$.
\item[\rm(ii)] 
\emph{Tensor products.}
Let $A$ and $B$ be unital complex Banach algebras.
Then both $A$ and $B$ embed unitally into the projective tensor product $A\widehat{\otimes}B$ through
the mappings $a\mapsto a\otimes 1_B$ and $b\mapsto 1_A\otimes b$, respectively. Therefore, if any of the algebras
has no quotients isomorphic to  $\mathbb{C}$ or $M_2(\mathbb{C})$, then
$A\widehat{\otimes}B$ has no quotients isomorphic to $\mathbb{C}$ or $M_2(\mathbb{C})$.
However, it must be noted that the semisimplicity does not always carry over from $A$ and $B$ to $A\widehat{\otimes}B$.
\item[\rm(iii)]
\emph{Matrix algebras.}
An immediate consequence of the preceding example is that the matrix algebra $M_n(A)$ 
has no quotients isomorphic to $\mathbb{C}$ or $M_2(\mathbb{C})$ for each unital complex Banach algebra $A$ and each $n\ge 3$.
Further, $M_n(A)$ is semisimple whenever $A$ is semisimple.
\item[\rm(iv)]
\emph{Banach algebras of vector-valued functions.}
Let $A$ be a unital complex Banach algebra and let $S$ be a non-empty set.
Then $A^S$ is a unital complex algebra with respect to the pointwise operations. 
Let $B$ be a unital subalgebra of $A^S$, and suppose that $B$ contains all the constant functions. 
Assume additionally that $B$ is equipped with a norm that makes it a Banach algebra.
Then $A$ embeds unitally into $B$ through the mapping $a\mapsto (s\mapsto a)$. 
Consequently, if $A$ has no quotients isomorphic to $\mathbb{C}$ or $M_2(\mathbb{C})$, then the same is true of $B$.
Further, for each $s\in S$, the evaluation mapping $\varepsilon_s\colon B\to A$ defined by $\varepsilon_s(f)=f(s)$
$(f\in B)$ is a surjective homomorphism and hence $\varepsilon_s(\text{Rad}(B))\subseteq\text{Rad}(A)$. 
Therefore, $B$ is semisimple whenever $A$ is semisimple. This makes it possible for us to show, for example, that any of the following unital Banach algebras of $A$-valued vector functions
have no quotients isomorphic to $\mathbb{C}$ or $M_2(\mathbb{C})$ provided that $A$ does not have them either:
\begin{itemize}
\item
The Banach algebra $\ell^\infty(S, A)$ of all $A$-valued bounded functions on the set $S$;
\item
The Banach algebra $C(K,A)$ of $A$-valued continuous functions on a compact Hausdorff space $K$;
\item
The Banach algebra $C^n(I,A)$ of all $n$-times continuously differentiable $A$-valued functions on a compact interval $I$, 
for each $n\in\mathbb{N}$;
\item
The $A$-valued Lipschitz algebras $\text{Lip}_\alpha(K,A)$ and $\text{lip}_\alpha(K,A)$ 
for each non-empty compact metric space $K$ and each $0<\alpha\le 1$.
\end{itemize}
Further, each of these algebras is semisimple given that $A$ is so.
\end{enumerate}
\end{example}


\begin{example}[von Neumman algebras with no central summands of type $I_1$ or $I_2$]

The first author and Miers established in \cite{BreMie} that every additive bijection between von Neumann algebras with no central summands of type $I_1$ or $I_2$ preserving commutativity in both directions is of the standard form in \eqref{ena}.
In order to compare Theorem \ref{mt} with the  result of \cite{BreMie}, it is convenient to record the following proposition.

\begin{proposition}\label{pr}
Let $\mathcal{M}$ be a von Neumann algebra, and let $d$ be a positive integer. 
Then the following assertions are equivalent:
\begin{enumerate}
\item[\rm(i)] 
$\mathcal{M}$ has a quotient isomorphic to $M_d(\mathbb{C})$;
\item[\rm(ii)] 
$\mathcal{M}$ has a central summand of type $I_d$.
\end{enumerate}
\end{proposition}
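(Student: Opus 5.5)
We prove the two implications separately. The easy direction is (ii)$\Rightarrow$(i). Suppose $\mathcal{M}$ has a central summand of type $I_d$, i.e.\ there is a nonzero central projection $z$ with $z\mathcal{M}$ of type $I_d$.

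Then $z\mathcal{M}\cong M_d(\mathcal{Z}(z\mathcal{M}))\cong M_d(C(\Omega))$ for some hyperstonean compact space $\Omega$. Fix any point $\omega\in\Omega$. Evaluation at $\omega$ gives a surjective unital homomorphism $M_d(C(\Omega))\to M_d(\mathbb{C})$. Composing with the surjection $\mathcal{M}\to z\mathcal{M}$, $x\mapsto zx$, yields a surjective homomorphism $\mathcal{M}\to M_d(\mathbb{C})$. Its kernel is a (maximal) ideal with quotient $M_d(\mathbb{C})$, which gives (i).

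For (i)$\Rightarrow$(ii), let $\pi\colon\mathcal{M}\to M_d(\mathbb{C})$ be a surjective homomorphism, and set $M=\ker\pi$, a maximal ideal.

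\emph{Step 1: $M$ is closed.} Maximal ideals of unital Banach algebras are closed, so $M$ is a norm-closed two-sided ideal, hence self-adjoint. This makes $\mathcal{M}/M$ a $C^*$-algebra isomorphic to $M_d(\mathbb{C})$.

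\emph{Step 2: reduce to a central summand.} Decompose $\mathcal{M}=\bigoplus_{n}z_n\mathcal{M}\oplus z_\infty\mathcal{M}$, where $z_n\mathcal{M}$ is of type $I_n$ and $z_\infty\mathcal{M}$ collects the parts of type $I_\infty$, II and III. Since the central projections $z_j$ sum to $1$ and are finitely many in the relevant grouping, $\pi(z_j)$ is a central idempotent of $M_d(\mathbb{C})$, hence $0$ or $1$. So exactly one summand $z\mathcal{M}$ has $\pi(z)=1$, and $\pi$ restricts to a surjection $z\mathcal{M}\to M_d(\mathbb{C})$.

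To handle infinitely many $I_n$ summands, I would group them as $\bigoplus_{n\le 2d}$ (finitely many pieces) together with a single remainder $\bigoplus_{n>2d}$.

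\emph{Step 3: eliminate the wrong summands.} This is the main obstacle.

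\begin{itemize}
\item \emph{Properly infinite part.} The type $I_\infty$, II$_\infty$ and III parts are properly infinite. Example~\ref{pi} then excludes a quotient $M_d(\mathbb{C})$ for them.
\item \emph{Type II$_1$ part.} Every projection halves: $1=p+q$ with $p\sim q$. Iterating, one writes $1$ as a sum of $2^k$ equivalent orthogonal projections with $2^k>d$. Each is then equivalent to its complement's pieces, and the images in $M_d(\mathbb{C})$ are equivalent projections summing to $1$ with $2^k$ terms. Hence each has rank $d/2^k<1$, so each image is $0$, a contradiction.
\item \emph{Type $I_n$ summands with $n\ne d$.} For a type $I_n$ summand, and for $\bigoplus_{n>2d}$, I would use the standard polynomial identity $s_{2n}$ of $M_n(C(\Omega))$. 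Any quotient $M_d(\mathbb{C})$ satisfies $s_{2n}$, which forces $d\le n$. Conversely, $z\mathcal{M}$ contains $n$ orthogonal equivalent projections summing to $1$. Their images are equivalent projections in $M_d(\mathbb{C})$, not all zero, summing to $1$, so they have equal rank $d/n$; this forces $n\mid d$, hence $n\le d$. For the part $\bigoplus_{n>2d}$, I would instead use halving: it splits into $m>d$ orthogonal pieces that are mutually equivalent after a suitable cut-down, again giving a contradiction.
\end{itemize}

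Combining these, $n\le d$ and $n\mid d$. The remaining task is to rule out proper divisors $n<d$. For this I would use that $M_n(C(\Omega))$ satisfies $s_{2n}$ while $M_d(\mathbb{C})$ does not when $d>n$, by the Amitsur--Levitzki theorem and its sharpness. Hence $n=d$, which is (ii).
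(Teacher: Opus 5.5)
Your overall route is the paper's. You use the type decomposition to isolate a single central summand $z\mathcal{M}$ with $\pi(z)=1$, and then rule out every type except $I_d$. Your variations are harmless. For a single type $I_n$ summand, Amitsur--Levitzki ($d\le n$) together with the rank count ($n\mid d$) replaces the paper's abelian-projection argument. Iterated halving for type II$_1$ replaces the Kadison--Ringrose lemma that the paper uses to produce $d+1$ equivalent projections summing to $1$.

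\textbf{The gap is the summand $\bigoplus_{n>2d}$.} This is the one case that needs real work (the paper's Case 2), and it is not handled.

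\begin{itemize}
\item Your first tool, $s_{2n}$, does not apply there: with $n$ unbounded, this summand satisfies no polynomial identity.
\item Your second, ``halving \ldots\ mutually equivalent after a suitable cut-down'', is not yet an argument. An $I_n$ block with $n$ odd cannot be halved. More generally, for a fixed $m>d$ the unit of an $I_n$ block is a sum of $m$ mutually equivalent orthogonal projections only when $m\mid n$, which fails for infinitely many $n$.
\end{itemize}

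Here is how the case can be done.

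\begin{itemize}
\item Write $n=(d+1)j_n+k_n$ with $j_n\ge 1$ and $0\le k_n\le d$.
\item Take equivalent orthogonal abelian projections $e_{n,1},\dots,e_{n,n}$ summing to the unit of the $n$-th block.
\item Set $P_m=\sum_n\sum_{j=0}^{j_n-1}e_{n,m+(d+1)j}$ for $m=1,\dots,d+1$. These are orthogonal and mutually equivalent.
\item Hence their images are $d+1$ orthogonal idempotents of equal rank in $M_d(\mathbb{C})$, so all are $0$.
\end{itemize}

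This still gives no contradiction until you deal with the leftover $q=1-P$, where $P=P_1+\dots+P_{d+1}$. That needs an idea you never state. In each block, $q$ is a sum of $k_n\le d$ abelian projections, while $P$ contains $(d+1)j_n\ge d+1$ of them. So $q=v^*v$ with $vv^*\le P$, summing partial isometries blockwise. Therefore $q=v^*(vv^*)Pv\in\ker\pi$, because $\ker\pi$ is an ideal containing $P$. Then $1=P+q\in\ker\pi$, which is the desired contradiction.

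With this subequivalence-into-the-kernel step your proof is complete. The step works uniformly for all $n>d$, so your threshold $2d$ plays no role.
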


\begin{proof}
(i)$\implies$(ii) 
Suppose that $\mathcal{I}$ is an ideal of $\mathcal{M}$ such that the quotient $\mathcal{M}/\mathcal{I}$ 
is isomorphic to $M_d(\mathbb{C})$, and we think of the quotient mapping $\pi_\mathcal{I}$ as a mapping 
from $\mathcal{M}$ onto $M_d(\mathbb{C})$. It should be pointed out that $\mathcal{I}$ is a maximal ideal.

Let us first examine several particular cases.

\emph{Case 0.}
Suppose that $\mathcal{M}$ is a type $I_n$ von Neumann algebra with $n<d$.
Take $p_1,\ldots,p_n$ mutually orthogonal and mutually equivalent abelian projections in $\mathcal{M}$ such that
$p_1+\cdots+p_n=1$. Then $\pi_\mathcal{I}(p_1),\ldots,\pi_\mathcal{I}(p_n)$ are mutually orthogonal
and mutually equivalent abelian projections in $M_d(\mathbb{C})$ such that
$\pi_\mathcal{I}(p_1)+\cdots+\pi_\mathcal{I}(p_n)=\pi_\mathcal{I}(1)=1$, which is impossible.

The next cases are based on the following observation.
If $p_1,\ldots,p_{d+1}$ are mutually orthogonal and mutually equivalent projections, then
$p_1+\cdots+p_{d+1}\in\mathcal{I}$. Indeed, $\pi_\mathcal{I}(p_1),\ldots,\pi_\mathcal{I}(p_{d+1})$ are mutually
orthogonal projections in $M_d(\mathbb{C})$ and therefore some of them is necessarily zero. Further, the projections
$\pi_\mathcal{I}(p_1),\ldots,\pi_\mathcal{I}(p_{d+1})$ are also mutually equivalent and, in consequence, all of them
are zero. This implies that $p_1,\ldots,p_{d+1}\in\mathcal{I}$ and hence $p_1+\cdots+p_{d+1}\in\mathcal{I}$.

\emph{Case 1.}
Suppose that $\mathcal{M}$ has no central portion of type $I$. 
From \cite[Lemma 6.5.6]{KR} it follows that there exist $p_1,\ldots,p_{d+1}$ 
mutually orthogonal and mutually equivalent projections such that $p_1+\dots+p_{d+1}=1$.
From what has previously been observed, it may be concluded that $1\in\mathcal{I}$ and therefore that $\mathcal{I}=\mathcal{M}$,
which is impossible.

\emph{Case 2.}
Suppose that $\mathcal{M}=\ell^\infty-\bigoplus_{n\in N}\mathcal{M}_n$, 
where $N\subseteq\{n\in\mathbb{N}\colon n\ge d\}$ and
$\mathcal{M}_n$ is a von Neumann algebra of type $I_n$ for each $n\in N$.
For each $n\in N$ take $j_n\in\mathbb{N}$ and $k_n\in\{0,1,\ldots,d\}$ such that
\[
n=(d+1)j_n+k_n,
\]
and take $p_{n,1},\ldots,p_{n,n}$ mutually orthogonal and mutually equivalent abelian projections in $\mathcal{M}_n$ such that
\[
p_{n,1}+\cdots+p_{n,n}=1_{\mathcal{M}_n}.
\]
We define projections $p_1,\ldots,p_{d+1},q$ by
\[
p_m=\sum_{n\in N}\sum_{j=0}^{j_n-1}p_{n,m+(d+1)j}\quad m\in\{1,\ldots,d+1\},
\]
\[
q=\sum_{n\in N\colon k_n\ne 0}\sum_{k=1}^{k_n}e_{n,(d+1)j_n+k}.
\]
Observe that $q$ is possibly zero, 
\[
p_1+\cdots+p_{d+1}+q=1,
\]
$p_1,\ldots,p_{d+1}$ are mutually equivalent and hence $p_1+\cdots+p_{d+1}\in\mathcal{I}$, 
and further, $q$ is Murray-von Neumann equivalent to a subprojection of $p_1,\ldots, p_{d+1}$, 
so a similar argument to that given above also shows that $q\in \mathcal{I}$. 
We thus get $1\in\mathcal{I}$, a contradiction.

Having disposed of these preliminary cases, we can now address the general case.
According to the type decomposition of the von Neumann algebras, there exist mutually orthogonal central projections
$z_1,\ldots,z_d,w_1,w_2,w_3$ such that
\[
z_1+\cdots+z_d+w_1+w_2+w_3=1,
\]
$z_k\mathcal{M}$ is of type $I_k$  or $z_k=0$ ($k=1,\ldots,d$),
$w_j\mathcal{M}$ is a von Neumann algebra as considered in Case $j$ above, or $w_j=0$ ($j=1,2$),
$w_3\mathcal{M}$ is a properly infinite von Neumann algebra or $w_3=0$.
Note that if $z$ is a central projection of $\mathcal{M}$, then $\pi_\mathcal{I}(z)$ is a central projection of $M_n(\mathbb{C})$
and therefore $\pi_\mathcal{I}(z)\in\{0,1\}$. Since
\[
\pi_\mathcal{I}(z_1)+\cdots+\pi_\mathcal{I}(z_d)+
\pi_\mathcal{I}(w_1)+\pi_\mathcal{I}(w_2)+\pi_\mathcal{I}(w_3)=1,
\]
we deduce that all the summands but exactly one are zero. 
Suppose that $\pi_\mathcal{I}(w_j)\neq 0$ for some $j\in\{1,2,3\}$. Then
\[
w_j\mathcal{M}/(w_j\mathcal{M}\cap\mathcal{I})\cong
(w_j\mathcal{M}+\mathcal{I})/\mathcal{I}\cong
\mathcal{M}/\mathcal{I}\cong M_d(\mathbb{C}),
\]
and, on account of the cases $1,2$ and Example \ref{pi}, this is impossible.
On the other hand, suppose that $\pi_\mathcal{I}(z_k)\neq 0$ for some $k\in\{1,\ldots,d-1\}$.
As before, this yields  $z_k\mathcal{M}/(z_k\mathcal{M}\cap\mathcal{I})\cong M_d(\mathbb{C})$,
which, according to case $0$, is impossible.
Consequently, the only possible case is that $\pi_\mathcal{I}(z_d)\neq 0$, and hence $z_d\neq 0$, which proves that 
$\mathcal{M}$ has a central summand of type $I_d$.

(ii)$\implies$(i)
Let $e$ be a central projection such that $e\mathcal{M}$ is a type $I_n$ von Neumann algebra.
Then $e\mathcal{M}$ can be thought of as $C(K,M_n(\mathbb{C}))$ for some hyper--Stonean space. 
Take $t\in K$ and define 
\[
I=\bigl\{x\in\mathcal{M}\colon (ex)(t)=0\bigr\}.
\]
Then $I$ is a closed (in norm) ideal of $\mathcal{M}$ and 
the mapping $x\mapsto (ex)(t)$ from $\mathcal{M}$ onto $M_n(\mathbb{C})$ 
drops to an isomorphism from $\mathcal{M}/I$ onto $M_n(\mathbb{C})$.
\end{proof}

Combining Theorem \ref{mt} and Proposition \ref{pr}, we obtain the following corollary.

\begin{corollary}\label{mtcvn}
Let $\mathcal M$ and $\mathcal N$ be von Neumann algebras with no central summands of type $I_1$ and $I_2$.
If  a surjective additive mapping $\Phi: \mathcal M\to\mathcal N$ satisfies $[\Phi(x^2),\Phi(x)] = 0$ for each $x\in \mathcal M$, 
then there exist a surjective Jordan   homomorphism $\Psi\colon \mathcal M\to \mathcal N$,
an invertible element $\lambda\in\mathcal{Z}(\mathcal N)$, and 
an additive mapping $\zeta\colon A\to\mathcal{Z}(\mathcal N)$ such that 
\[
\Phi(x)=\lambda\Psi(x)+\zeta(x) \quad\forall x\in \mathcal M.
\]
Furthermore, $\Psi$ is the direct sum of a homomorphism and  an anti-homo\-morphism.
\end{corollary}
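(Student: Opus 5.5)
The corollary will follow by checking that $\mathcal M$ and $\mathcal N$ satisfy the hypotheses of Theorem \ref{mt}, and then applying that theorem verbatim.

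Every von Neumann algebra is a unital complex Banach algebra, more precisely a unital $C^*$-algebra. Every $C^*$-algebra is semisimple: its Jacobson radical is a closed ideal, hence a $C^*$-algebra in its own right, and it contains no nonzero element $a$ because $a^*a$ would then be a self-adjoint element of the radical with spectrum $\{0\}$. That forces $\|a^*a\|=r(a^*a)=0$. Consequently $\mathcal N$ is semisimple, and the semisimplicity assumption on $B$ in Theorem \ref{mt} holds.

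It remains to verify the quotient condition. Suppose, for instance, that $\mathcal M$ had a quotient isomorphic to $\mathbb C=M_1(\mathbb C)$ or to $M_2(\mathbb C)$. By the implication (i)$\implies$(ii) of Proposition \ref{pr}, with $d=1$ or $d=2$, $\mathcal M$ would then have a central summand of type $I_1$ or $I_2$, contrary to the hypothesis. The same argument applies to $\mathcal N$. Hence neither algebra has a quotient isomorphic to $\mathbb C$ or $M_2(\mathbb C)$.

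With all hypotheses in place, Theorem \ref{mt} applied to $A=\mathcal M$ and $B=\mathcal N$ yields the surjective Jordan homomorphism $\Psi$, which is a direct sum of a homomorphism and an anti-homomorphism, together with the invertible central element $\lambda$ and the additive central-valued map $\zeta$. This is exactly the conclusion of the corollary; the $A$ appearing in its statement should be read as $\mathcal M$.

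The only step that requires any thought is the semisimplicity of $\mathcal N$, and it is standard. The substantive input is Proposition \ref{pr}, which has already been established.
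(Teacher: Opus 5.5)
Your proposal is correct and matches the paper, which obtains the corollary simply by combining Theorem~\ref{mt} with the implication (i)$\implies$(ii) of Proposition~\ref{pr} for $d=1,2$. Your explicit verification that $\mathcal N$ is semisimple (a $C^*$-algebra has zero Jacobson radical) fills in a detail the paper leaves implicit; the self-adjointness of the radical is not actually needed there, since $a^*a$ lies in the radical simply because the radical is a left ideal.
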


This corollary generalizes the main result of \cite{BreMie} in which $\Phi$ is assumed to be bijective and $\Phi^{-1}$ is  also  assumed to preserve commutativity (of some special pairs of elements). From \cite[Example 7.17]{zpd} it is evident that the latter is not automatically fulfilled; more precisely, $\Phi$ in Corollary \ref{mtcvn} may be bijective but $\Phi^{-1}$ does not preserve commutativity (in fact, does not map the center of $\mathcal N$ to the center of $\mathcal M$).
\end{example}

\section{Lie homomorphisms}\label{s4}

Recall that an additive mapping $\Phi$ from a ring $A$ to a ring $B$ is called a {\em Lie homomorphism} if 
\begin{equation}\label{lh}
    \Phi([x,y])=[\Phi(x),\Phi(y)]\quad\forall x,y\in A.
\end{equation}
Clearly, $\Phi$ preserves commutativity.

Obvious examples of Lie homomorphisms are homomorphisms and the negatives of  anti-homo\-morphisms. 
A more general example is the  {\em  direct sum of a homomorphism and the negative of an anti-homomorphism}, i.e.,
a mapping $\Theta:A\to B$ for which  
there exists a central idempotent $e\in B$ such that $e\Theta$ is a  homomorphism
 and $(1-e)\Theta$ is the negative of an anti-homomorphism
  (i.e., $(1-e)\Theta(xy)=-(1-e)\Theta(y)\Theta(x)$ for all 
  $x,y\in A$). 

Another standard example is any 
additive mapping $\zeta$ from
$A$ 
 to  $\mathcal Z(B)$ that vanishes on commutators (i.e., $\zeta([x,y])=0$
for all $x,y\in A$). Observe that the sum of an arbitrary Lie homomorphism and such a mapping $\zeta$ is again a Lie homomorphism.

It is a classical problem to find conditions under which   the above examples are also the only examples of Lie homomorphisms. Obviously,   we can  tackle it by
using Theorem \ref{mt}. 
 
\begin{corollary}\label{mc}
Let $A$ and $B$ be unital complex Banach algebras, and 
let $\Phi\colon A\to B$ be a surjective Lie homomorphism. 
Suppose that $A$ and $B$ have no quotients isomorphic to $\mathbb{C}$ or $M_2(\mathbb{C})$, and further 
suppose that $B$ is semisimple.
Then $\Phi=\Theta+\zeta$ where
$\Theta:A\to B$ is a surjective direct  sum of a homomorphism and the negative of an anti-homomorphism and $\zeta:A\to \mathcal Z(B)$ is an additive mapping  vanishing on commutators.
\end{corollary}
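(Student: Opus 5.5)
Since $\Phi$ is a Lie homomorphism, it preserves commutativity; in particular $[\Phi(x^2),\Phi(x)]=\Phi([x^2,x])=0$ for all $x\in A$. Theorem \ref{mt} therefore applies. It gives a surjective Jordan homomorphism $\Psi$, an invertible $\lambda\in\mathcal Z(B)$ and an additive $\zeta_0\colon A\to\mathcal Z(B)$ with $\Phi=\lambda\Psi+\zeta_0$. It also gives a central idempotent $e$ such that $e\Psi$ is a homomorphism and $(1-e)\Psi$ is an anti-homomorphism. The plan is to pin down $\lambda$ using the Lie identity \eqref{lh}, and then set $\Theta=e\Psi-(1-e)\Psi$ and $\zeta=\zeta_0$.

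\textbf{Computing $\lambda$.} First, the commutator behaves differently on the two summands: for $x,y\in A$,
\[
e\Psi([x,y])=e[\Psi(x),\Psi(y)],\qquad (1-e)\Psi([x,y])=-(1-e)[\Psi(x),\Psi(y)].
\]
Next, since $\zeta_0$ is central, \eqref{lh} gives
\[
\lambda\Psi([x,y])+\zeta_0([x,y])=\lambda^2[\Psi(x),\Psi(y)].
\]
Multiplying by $e$ yields $e(\lambda-\lambda^2)[\Psi(x),\Psi(y)]\in\mathcal Z(B)$ for all $x,y$. Multiplying by $1-e$ yields $(1-e)(\lambda+\lambda^2)[\Psi(x),\Psi(y)]\in\mathcal Z(B)$.

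Because $\Psi$ is surjective, these say that $\mu:=e(\lambda-\lambda^2)$ satisfies $\mu[B,B]\subseteq\mathcal Z(B)$. Pass to a primitive quotient $B/P$. There $\pi_P(\mu)$ is a central element of a primitive algebra, and the central elements of a primitive Banach algebra are scalars, so $\pi_P(\mu)\in\mathbb C$. If $\pi_P(\mu)\neq0$, then $[B/P,B/P]$ is central in $B/P$. Hence $[[x,y],z]=0$ is an identity of $B/P$, and by Lemma \ref{1600} (degree $3$, so no quotients $\mathbb C$) the ideal it generates would be everything, a contradiction. So $\pi_P(\mu)=0$ for every $P$, and semisimplicity gives $\mu=0$. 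Likewise $(1-e)(\lambda+\lambda^2)=0$. Since $\lambda$ is invertible, we get $e\lambda=e$ and $(1-e)\lambda=-(1-e)$, that is, $\lambda=2e-1$.

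\textbf{Conclusion.} With $\lambda=2e-1$ we have $\lambda\Psi=e\Psi-(1-e)\Psi=:\Theta$, which is a direct sum of a homomorphism and the negative of an anti-homomorphism. It is surjective because $\lambda$ is an invertible central element and $\Psi$ is surjective. Then $\Phi=\Theta+\zeta_0$. Since $\Theta$ is itself a Lie homomorphism, $\zeta_0([x,y])=\Phi([x,y])-\Theta([x,y])=[\Phi(x),\Phi(y)]-[\Theta(x),\Theta(y)]=0$, because $\zeta_0$ is central. So $\zeta_0$ vanishes on commutators.

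The main obstacle is the step showing that a central element multiplying all commutators into the centre must vanish. The primitive-quotient argument above handles it; one has to check carefully that centres of primitive Banach algebras are $\mathbb C$, so that a nonzero image of $\mu$ is invertible in $B/P$.
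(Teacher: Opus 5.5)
Your proof is correct. Its skeleton matches the paper's: apply Theorem \ref{mt}, multiply the Lie identity by $e$ and by $1-e$, show $(\lambda-1)e=0=(\lambda+1)(1-e)$, and take $\Theta=\lambda\Psi$.

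The genuine difference is in the step showing that a central element $\mu$ with $\mu[B,B]\subseteq\mathcal{Z}(B)$ must vanish.
\begin{itemize}
\item The paper commutes once more to get $(\lambda-1)e\,[[B,B],B]=\{0\}$. It then uses the fact that $B$ is generated by $[[B,B],B]$. That fact comes from Lemma \ref{1601}, which rests on Lemma \ref{1602} and the deeper commutator and Jordan-subring results behind them, and at this step it uses the absence of $M_2(\mathbb{C})$ quotients.
\item You instead localize at primitive ideals. You use that the centre of a unital primitive Banach algebra is $\mathbb{C}1$; this holds by Rickart's Schur lemma plus Gelfand--Mazur, or because $\mathcal{Z}(R)\subseteq\mathcal{C}(R)=\mathbb{C}$ as in Remark \ref{rem22}, so the point you flagged is fine. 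A nonzero $\pi_P(\mu)$ would then make $[[X_1,X_2],X_3]$ an identity of $B/P$. Lemma \ref{1600} rules this out using only Kaplansky's theorem and the absence of quotients $\mathbb{C}$, and semisimplicity finishes.
\end{itemize}
So your route trades the heavier generation lemma for an analytic fact about primitive Banach algebras and a second use of semisimplicity. The paper's argument is global and needs neither of these at this step.

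Your derivation that $\zeta$ vanishes on commutators, via $\Theta$ being itself a Lie homomorphism, is a slightly cleaner variant of the paper's reading it off from \eqref{lh3}.

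Both arguments can actually be shortened. Since $\mu$ is central, $\mu[[B,B],B]=\{0\}$ already forces $\mu$ to annihilate the ideal of $B$ generated by $[[B,B],B]$. By Lemma \ref{1600} applied directly to $B$, that ideal is all of $B$. Hence $\mu=\mu 1=0$, with no need for primitive quotients, the centre of $B/P$, semisimplicity, or ring generation.
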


\begin{proof} 
Let $\Psi$, $\zeta,$ $\lambda$, and $e$ denote the maps, the invertible central element and the central projection obtained by applying Theorem \ref{mt} to $\Phi$. By applying the hypothesis on $\Phi$ (see \eqref{lh}), we obtain
   \begin{equation}\label{lh2}
       \lambda \Psi([x,y]) + \zeta([x,y]) = \lambda^2 [\Psi(x),\Psi(y)]\quad\forall x,y\in A.
   \end{equation}
    
    Multiplying \eqref{lh2} by $e$ and using that $e\Psi$ is a homomorphism it follows
    that 
    \begin{equation}\label{lh3}
       (\lambda^2 - \lambda)e[\Psi(x),\Psi(y)] = e\zeta([x,y])\in \mathcal Z(B) 
\quad\forall x,y\in A.    \end{equation}
    Since $\Psi$ is surjective and $\lambda$ is invertible, this yields
    $$(\lambda - 1)e[[B,B],B]=\{0\}.$$
Lemma \ref{1601} shows that the algebra $B$ is generated by  $[[B,B],B]$. Consequently,
\begin{equation}\label{ee}
    (\lambda - 1)e=0,
\end{equation}
and hence
 \begin{equation} \label{ff}
e\zeta([x,y])=0 
\quad\forall x,y\in A, 
\end{equation}
by \eqref{lh3}.

Similarly, by multiplying \eqref{lh2} by $1-e$ and using that $(1-e)\Psi$ is an anti-homomorphism, we obtain
\begin{equation}\label{ee2}
    (\lambda + 1)(1-e)=0,
\end{equation}
and \begin{equation} \label{ff2}
(1-e)\zeta([x,y])=0 
\quad\forall x,y\in A. \end{equation}
From \eqref{ee} and \eqref{ee2} we infer that
$$\Theta:=\lambda\Psi = e\Psi - (1-e)\Psi$$
is the direct sum of a homomorphism and the negative of an anti-homomor\-phism, and from \eqref{ff} and \eqref{ff2} we infer that $\zeta$ vanishes on commutators.
\end{proof}

The following example justifies the assumption that our algebras have no quotients isomorphic to $\mathbb C$. It is similar to \cite[Example 5.1]{Brev} (see also \cite[Example 7.4]{BZ2}).

\begin{example} \label{ex42}Let $H$ be an  infinite-dimensional, separable Hilbert space, let $K(H)$ be the $C^*$-algebra of  all compact operators on $H$, and let 
 $A$ be the $C^*$-algebra  obtained by adjoining a unity to the direct product $M=K(H)\times K(H)$. Note  that $M$
is a maximal ideal of $A$ and $A/M\cong \mathbb C$.

 Define $\Phi:A\to A$ by
$$\Phi(\lambda \mathbf{1} + (a,b)) = \lambda \mathbf{1} + (a, -b^*)\quad \forall \lambda\in \mathbb C,\,\,\,\forall a,b\in K(H).$$
 One easily checks that 
$\Phi$ is a Lie automorphism of $A$. 

We will show not only that $\Phi$ does not satisfy the conclusion of Corollary \ref{mc}, but that it is not standard in the sense of \cite{BKZ}.
Suppose  this is not true, i.e., suppose $\Phi$ is standard.
 Then there exist a homomorphism
 $\psi_1:A\to A$ and an anti-homomorphism 
 $\psi_2:A\to A$
such that 
\[
\psi_1(A)\cdot \psi_2(A)=\psi_2(A)\cdot \psi_1(A) = \{0\},
\] 
and  $\Phi$ coincides with $\psi_1-\psi_2$ on $[A,A]$.

Let $F(H)$ denote the algebra of all finite rank operators in $K(H)$.
Suppose $\ker \psi_1$ contains a nonzero element in $F(H)\times \{0\}$. Since the algebra $F(H)$ is simple, we then have
$\psi_1(F(H)\times \{0\})= 0$. 
In particular,
$\psi_1$ vanishes on $[F(H),F(H)]\times \{0\})$, which
implies 
\[
\psi_2((a,0)) = -\Phi((a,0))= -(a,0),
\] 
for every $a\in [F(H),F(H)]$. However, taking any $a,b\in [F(H),F(H)]$ such that $ab\ne 0$ and $ba=0$,
we arrive at a contradiction since 
\[
(ab,0) = \psi_2((a,0))\cdot \psi_2((b,0))=\psi_2((b,0)(a,0))=0.
\]
Therefore, $\psi_1$ is injective on $F(H)\times \{0\}$. 

Suppose $\psi_1(1)\in M$. Then 
$\psi_1(1)= (e,f)$ where $e$ and $f$ are idempotents in $K(H)$.  As such,
$e$ and $f$ are finite rank operators.
We have
$$\psi_1(x)=\psi_1(1x1)=\psi_1(1)\psi_1(x)\psi_1(1)=(e,f)\psi_1(x)(e,f),$$  for every
$x\in A$, showing that $$\psi_1(A) \subseteq eK(H)e\times fK(H)f.$$ However, $eK(H)e\times fK(H)f$ is  a finite-dimensional space, which contradicts the injectivity of $\psi_1$ on  $F(H)\times \{0\}$.
Therefore, $\psi_1(1)\notin M$.

In a similar fashion we show   that
 $\psi_2$ is injective on $\{0\}\times F(H)$, from which we derive that 
 $\psi_2(1)\notin M$. However, we then have
$\psi_1(1)\cdot \psi_2(1)\ne 0$, which contradicts
$\psi_1(A)\cdot \psi_2(A) = \{0\}$. This proves that  $\Phi$ is not standard. 

Let us add that $\Phi$  does not satisfy the conclusion of Theorem \ref{mt}. 
Indeed, the center of $A$ is equal to $\mathbb C  1$ (so there are no nontrivial central idempotents in $A$) and it is easy to check that $\Phi$ is not of the form 
$\Phi(x)=\lambda \Psi(x) +\zeta(x) 1$ with $\lambda\in \mathbb C$, $\zeta:A\to\mathbb C$ and
$\Psi$ a homomorphism or an antihomomorphism.
Thus, this example also shows that the assumption that algebras have no quotients isomorphic to $\mathbb C$ is necessary in Theorem \ref{mt}.  As mentioned in the introduction, so is the assumption that algebras have no quotients isomorphic to $M_2(\mathbb C)$.
\end{example}


\subsection*{Acknowledgements} The first author was partially supported by the ARIS Grants P1-0288 and J1-60025. The second, third, and fourth author  were partially supported by
MCIN/AEI/10.13039/501100011033 and by ``ERDF A way of making Europe''
grant PID2021-122126NB-C31,
by the IMAG--Mar{\'i}a de Maeztu grant CEX2020-001105-M/AEI/10.13039/
501100 011033, and
by Junta de Andalucía grants no. FQM185 and FQM375.
The second  author was partially supported by Grant FPU21/00617 at University of Granada
founded by Ministerio de Universidades (Spain).

\subsection*{Data availability}

There are no data associates for the submission entitled ``Commutativity preserving mappings in Banach algebras''.

\subsection*{Statements and Declarations}

The authors declare that they have no financial or conflict of interest.

\end{document}